\begin{document}

\newtheorem{theorem}{Theorem}
\newtheorem{lemma}[theorem]{Lemma}
\newtheorem{claim}[theorem]{Claim}
\newtheorem{cor}[theorem]{Corollary}
\newtheorem{prop}[theorem]{Proposition}
\newtheorem{definition}{Definition}
\newtheorem{question}[theorem]{Open Question}
\newtheorem{example}[theorem]{Example}
\newtheorem{remark}[theorem]{Remark}

\numberwithin{equation}{section}
\numberwithin{theorem}{section}

 \newcommand{\F}{\mathbb{F}}
\newcommand{\K}{\mathbb{K}}
\newcommand{\D}[1]{D\(#1\)}
\def\scr{\scriptstyle}
\def\\{\cr}
\def\({\left(}
\def\){\right)}
\def\[{\left[}
\def\]{\right]}
\def\<{\langle}
\def\>{\rangle}
\def\fl#1{\left\lfloor#1\right\rfloor}
\def\rf#1{\left\lceil#1\right\rceil}
\def\le{\leqslant}
\def\ge{\geqslant}
\def\eps{\varepsilon}
\def\mand{\qquad\mbox{and}\qquad}

\def\vec#1{\mathbf{#1}}

\newcommand{\lcm}{\operatorname{lcm}}

\def\bl#1{\begin{color}{blue}#1\end{color}} 

\newcommand{\C}{\mathbb{C}}
\newcommand{\Fq}{\mathbb{F}_q}
\newcommand{\Fp}{\mathbb{F}_p}
\newcommand{\Disc}[1]{\mathrm{Disc}\(#1\)}
\newcommand{\Res}[1]{\mathrm{Res}\(#1\)}
\newcommand{\ord}{\mathrm{ord}}

\newcommand{\Q}{\mathbb{Q}}
\newcommand{\Z}{\mathbb{Z}}
\renewcommand{\L}{\mathbb{L}}

\newcommand{\Norm}{\mathrm{Norm}}

\def\cA{{\mathcal A}}
\def\cB{{\mathcal B}}
\def\cC{{\mathcal C}}
\def\cD{{\mathcal D}}
\def\cE{{\mathcal E}}
\def\cF{{\mathcal F}}
\def\cG{{\mathcal G}}
\def\cH{{\mathcal H}}
\def\cI{{\mathcal I}}
\def\cJ{{\mathcal J}}
\def\cK{{\mathcal K}}
\def\cL{{\mathcal L}}
\def\cM{{\mathcal M}}
\def\cN{{\mathcal N}}
\def\cO{{\mathcal O}}
\def\cP{{\mathcal P}}
\def\cQ{{\mathcal Q}}
\def\cR{{\mathcal R}}
\def\cS{{\mathcal S}}
\def\cT{{\mathcal T}}
\def\cU{{\mathcal U}}
\def\cV{{\mathcal V}}
\def\cW{{\mathcal W}}
\def\cX{{\mathcal X}}
\def\cY{{\mathcal Y}}
\def\cZ{{\mathcal Z}}

\def\fra{{\mathfrak a}} 
\def\frb{{\mathfrak b}}
\def\frc{{\mathfrak c}}
\def\frd{{\mathfrak d}}
\def\fre{{\mathfrak e}}
\def\frf{{\mathfrak f}}
\def\frg{{\mathfrak g}}
\def\frh{{\mathfrak h}}
\def\fri{{\mathfrak i}}
\def\frj{{\mathfrak j}}
\def\frk{{\mathfrak k}}
\def\frl{{\mathfrak l}}
\def\frm{{\mathfrak m}}
\def\frn{{\mathfrak n}}
\def\fro{{\mathfrak o}}
\def\frp{{\mathfrak p}}
\def\frq{{\mathfrak q}}
\def\frr{{\mathfrak r}}
\def\frs{{\mathfrak s}}
\def\frt{{\mathfrak t}}
\def\fru{{\mathfrak u}}
\def\frv{{\mathfrak v}}
\def\frw{{\mathfrak w}}
\def\frx{{\mathfrak x}}
\def\fry{{\mathfrak y}}
\def\frz{{\mathfrak z}}

\def\ov\Q{\overline{\Q}}
\def \brho{\boldsymbol{\rho}}

\def \fP {\mathfrak P}

\def \Prob{{\mathrm {}}}
\def\e{\mathbf{e}}
\def\ep{{\mathbf{\,e}}_p}
\def\epp{{\mathbf{\,e}}_{p^2}}
\def\em{{\mathbf{\,e}}_m}

\newcommand{\sR}{\ensuremath{\mathscr{R}}}
\newcommand{\sDI}{\ensuremath{\mathscr{DI}}}
\newcommand{\DI}{\ensuremath{\mathrm{DI}}}

\newcommand{\Orb}[1]{\mathrm{Orb}\(#1\)}
\newcommand{\aOrb}[1]{\overline{\mathrm{Orb}}\(#1\)}
\def \PrePer{{\mathrm{PrePer}}}
\def \Per{{\mathrm{Per}}}

\def \Nm{{\mathrm{Nm}}}
\def \Gal{{\mathrm{Gal}}}

\newenvironment{notation}[0]{%
  \begin{list}%
    {}%
    {\setlength{\itemindent}{0pt}
     \setlength{\labelwidth}{1\parindent}
     \setlength{\labelsep}{\parindent}
     \setlength{\leftmargin}{2\parindent}
     \setlength{\itemsep}{0pt}
     }%
   }%
  {\end{list}}

\definecolor{dgreen}{rgb}{0.,0.6,0.}
\def\tgreen#1{\begin{color}{dgreen}{\it{#1}}\end{color}}
\def\tblue#1{\begin{color}{blue}{\it{#1}}\end{color}}
\def\tred#1{\begin{color}{red}#1\end{color}}
\def\tmagenta#1{\begin{color}{magenta}{\it{#1}}\end{color}}
\def\tNavyBlue#1{\begin{color}{NavyBlue}{\it{#1}}\end{color}}
\def\tMaroon#1{\begin{color}{Maroon}{\it{#1}}\end{color}}

\title[Dynamical irreducibility modulo primes]{On the frequency of primes preserving dynamical irreducibility of polynomials}

\author[A. Ostafe] {Alina Ostafe}
\address{School of Mathematics and Statistics, University of New South Wales, Sydney NSW 2052, Australia}
\email{alina.ostafe@unsw.edu.au}

 \author[I.~E.~Shparlinski]{Igor E. Shparlinski}
 \address{I.E.S.: School of Mathematics and Statistics, University of New South Wales.
 Sydney, NSW 2052, Australia}
 \email{igor.shparlinski@unsw.edu.au}
 
 \begin{abstract} Towards a well-known open question in arithmetic dynamics, L.~M{\'e}rai, A.~Ostafe and I.~E.~Shparlinski (2023), 
have shown, for a class of polynomials $f \in \Z[X]$, 
 which in particular includes all quadratic polynomials,  that, under some natural conditions (necessary for quadratic polynomials),  the set of primes $p$, such that  all iterations of $f$ are irreducible 
 modulo $p$, is of relative density zero, with an explicit estimate on the rate 
 of decay. This result relies on some bounds on character sums via the 
 Brun sieve. Here we use the Selberg sieve and in some cases obtain a substantial quantitative
 improvement. 
 \end{abstract}

\pagenumbering{arabic}

\keywords{Dynamical irreducibility, Selberg sieve, character sums} 
\subjclass[2020]{Primary 37P05, 37P25 ; Secondary 11L40, 11N36, 11T06}

\maketitle

\tableofcontents

\section{Introduction}
\subsection{Motivation}
\label{sec:mot} 

Given a polynomial $f\in \K[X]$ over a field $\K$,
we say that it is  {\it dynamically irreducible\/} if all iterates 
$$
  f^{(0)}(X)  = X, \qquad f^{(n)}(X)  = f\(f^{(n-1)}(X)\), \quad
  n =1, 2, \ldots\,.
$$
are irreducible. We note that this property is also known as {\it stability\/} and 
has been used in a number of works,  see, for example,~\cite{Ali,AyMcQ,Gomez_Nicolas,GNOS,Jon1,JB,Kon}. 
However, as in~\cite{MOS}, we prefer to use the more informative term of dynamical irreducibility, 
introduced by Heath-Brown and Micheli~\cite{H-BM}.

For a polynomial $f\in\Q[X]$ and a prime $p$ we denote by $\bar{f_p}\in\Fp[X]$ the  {\it image\/} of $f$ 
in the finite field $\F_p$ of $p$ elements, that is, the polynomial obtained by reducing modulo $p$ all the coefficients of $f$. 

Furthermore, we define
\begin{equation}
\label{eq:stable p}
\cP_f =  \{p:~\bar{f_p} \text{ is dynamically irreducible over } \Fp \}.
\end{equation}

Clearly, if $f\in\Q[X]$ is not dynamically irreducible then $\cP_f$ is a finite set
(consisting only of finitely many primes of ``bad reduction''). It has been asked in~\cite{BdMIJMST}
whether this is still true for any polynomial. 
More precisely, we recall  the    
following~\cite[Question~19.12]{BdMIJMST}.

\begin{question}
 Let $f\in\Q[X]$ be  of degree $d\geq 2$. Is it true that 
 $\cP_f$ 
 is a finite set?
\end{question}

Furthermore, 
Jones~\cite[Conjecture~6.3]{Jon2} has conjectured that $x^2+1$ is dynamically irreducible over $\Fp$ if and only if $p=3$. 

Ferraguti~\cite[Theorem~2.3]{Fer} gives a conditional result that if the size of the Galois group $\Gal\(f^{(n)}\)$  of $f^{(n)}$ is asymptotically close to its largest possible value then the set of  primes~\eqref{eq:stable p} has density zero. While this condition on 
the size of $\Gal\(f^{(n)}\)$  is very likely to be generically satisfied,  
 it may be difficult to verify it for concrete polynomials or find examples of such polynomials. 
 K{\"o}nig~\cite{Kon} has further restricted the class of polynomials for which the set  $\cP_f$ 
 is of positive density. 
 

Recently, it has been shown~\cite{MOS} that for  a special class of polynomials which includes trinomials of the form $f(X)=aX^d+bX^{d-1}+c\in \Z[X]$ of even degree, and hence all quadratic polynomials, the set $\cP_f$  is of relative zero density. 

More precisely,  for  $f \in \Z[X]$ and $Q\ge 2$, we define
\begin{equation}
\label{eq:PfQ}
 P_f(Q)=\#\{p\in [Q,   2Q] \cap \cP_f\}. 
\end{equation}

Then, by~\cite[Theorem~1.2]{MOS}, if the derivative $f'$  of  $f\in \Z[X]$ is
of the form
\begin{equation}\label{eq:f'shape}
f'(X)= g(X)^2 (aX+b),  \qquad  g(X) \in \Z[X], \ a,b \in \Z, \ a \ne 0,
\end{equation}
and $\gamma=-b/a$ is not  pre-periodic (that is, the set $\{f^{(n)}(\gamma):~n =1, 2, \ldots\}$ is infinite),  then 
\begin{equation}\label{eq:MOS bound}
 P_f(Q)  \le \frac{(\log \log\log\log Q)^{2+o(1)}} {\log\log\log Q}  \cdot \frac{Q }{\log Q}, \qquad  \text{as}\  Q\to \infty.
\end{equation}

It is important to emphasise that the above necessary conditions  are 
easily verifiable from the initial data.

Moreover,  it is also shown in~\cite[Theorem~1.4]{MOS} that 
under  the Generalised Riemann Hypothesis (GRH),   a stronger bound
\begin{equation}\label{eq:MOS bound GRH}
 P_f(Q)  \ll \frac{1} {\log\log Q}  \cdot \frac{Q }{\log Q} 
\end{equation}
 holds; we refer to Section~\ref{sec:note} for some standard notations, such as
 the symbol `$\ll$'. 

The approach of~\cite{MOS} is based on a combination of several rather diverse
techniques: 
 \begin{itemize}
\item[(i)] effective results from Diophantine geometry~\cite{BEG}, 
\item[(ii)]  the square-sieve of Heath-Brown~\cite{H-B}, 
\item[(iii)]  a  refined bound of character sums over almost-primes~\cite{KonShp}. 
\end{itemize}

Here we use the Selberg sieve,  which makes a rather significant 
effect on our argument in Part~(iii) in the above.  This leads to a better version of~\eqref{eq:MOS bound}. 
 We also use a different input from Diophantine geometry (related to Part~(i)) and 
introduce another class of polynomials for which we establish quite substantial 
improvements of the bounds~\eqref{eq:MOS bound} and~\eqref{eq:MOS bound GRH}, 
roughly speaking with one less iteration of the logarithm in the saving against the trivial
bound. 

In fact,  the approach via  the Selberg sieve may also be used to refine the results of~\cite{KonShp}.
Namely, one can  obtain an asymptotic formula for the number of quadratic nonresidues modulo 
a prime $p$ in a very short interval
which holds for almost all primes $p$. The main point here is that the interval can be away from 
the origin (which makes it very different from classical results for initial intervals obtained
via the large sieve).

\subsection{Main results}
\label{sec:res}
The aforementioned bound  from~\cite{KonShp}  on character sums over almost-primes
is obtained via the classical Brun sieve,  see, for 
example,~\cite[Section~I.4.2, Theorem~3]{Ten}.

Here we consider a slightly different character sum, with so-called Selberg weights, 
see~\cite[Section~3.2]{MonVau}, 
and using this idea we obtain a stronger bound on $ P_f(Q)$. 

We define the following classes of polynomials in $\Z[X]$, which we denote by $\cP_1$ and $\cP_2$:
\begin{itemize}
\item[$\cP_1$:] $f \in \Z[X]$ of degree $d\ge 2$ 
such that the derivative $f'$ is of the form~\eqref{eq:f'shape}. 
In this case we define
\begin{equation}\label{eq: gamma-P1}
\gamma=-b/a
\end{equation}
to be the only critical point of odd multiplicity.

\item[$\cP_2$:]  $f \in \Z[X]$ of degree $d\ge 2$ of the form  
$$f(X)=f_dX^d+f_{d-1}X^{d-1}+f_0\in\Z[X], \quad f_d, f_{d-1}\ne 0,$$ such that $0$ is pre-periodic   under $f$. In this case we define 
\begin{equation}\label{eq: gamma-P2}
\gamma=-\frac{f_{d-1}(d-1)}{df_d}
\end{equation}
 to be the only non-zero critical point of $f$.
\end{itemize}

We first slightly improve~\eqref{eq:MOS bound} for polynomials $f \in  \cP_1$ and
extend it to  polynomials  $f \in  \cP_2$. 


\begin{theorem}
\label{thm:P1 P2}
Let $f \in  \cP_1 \cup \cP_2$. Assume that   
$\gamma$, defined for each of the classes $\cP_1$ and $\cP_2$ by~\eqref{eq: gamma-P1}
and~\eqref{eq: gamma-P2}, respectively, 
is not a pre-periodic point of $f$. Then   one has 
$$
 P_f(Q)  \ll \frac{1} {\log\log\log Q}  \cdot \frac{Q }{\log Q}.
$$
\end{theorem}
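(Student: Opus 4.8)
The plan is to translate membership $p\in\cP_f$ into a large family of quadratic non-residue conditions along the orbit of $\gamma$, and then to show, by a second-moment estimate whose underlying character sum over almost-primes is controlled by the Selberg sieve, that such a coincidence of non-residues is rare among primes $p\in[Q,2Q]$. First I would record the dynamical reduction. For both $\cP_1$ and $\cP_2$ the point $\gamma$ is, away from the finitely many bad primes, the only critical point governing the factorisation of the iterates: in $\cP_1$ the shape $f'=g^2\,(aX+b)$ of~\eqref{eq:f'shape} singles out $\gamma$ as the unique critical point of odd multiplicity, while in $\cP_2$ the preperiodicity of $0$ removes that point and leaves $\gamma$. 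The standard stability criterion then forces the discriminant of $f^{(n)}$ to agree, up to a perfect square, with $\pm$ a product of the orbit values $m_k:=f^{(k)}(\gamma)$, so that dynamical irreducibility of $\bar{f_p}$ makes every $m_n$ with $1\le n\le N$ a quadratic non-residue modulo $p$. Writing $\chi_p=\left(\frac{\cdot}{p}\right)$ for the Legendre symbol, this means $\chi_p(m_n)=-1$ for all $n\le N$, hence $\sum_{n\le N}\chi_p(m_n)=-N$ whenever $p\in\cP_f$.

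Next I would fix $N=N(Q)$ and secure enough arithmetic independence of the $m_n$. Since $\deg f^{(n)}=d^{\,n}$ one has $\log m_n\asymp d^{\,n}$, so the moduli occurring below --- squarefree kernels of the products $m_im_j$ --- stay below a fixed power of $\log Q$ only while $d^{\,N}\ll\log\log Q$, that is $N\asymp\log\log\log Q$; this is precisely the range in which unconditional, Siegel--Walfisz type cancellation for character sums over primes is available. Within this range the hypothesis that $\gamma$ is not preperiodic makes the orbit infinite, and the Diophantine geometry input (a different one from the appeal to~\cite{BEG} in~\cite{MOS}) guarantees that for $i\ne j$ the integer $m_im_j$ is not a perfect square, so that $\chi_p(m_im_j)$ is a non-principal character as a function of $p$.

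I would then run the second moment. Set
$$
\Sigma=\sum_{p\in[Q,2Q]}\left(\sum_{n\le N}\chi_p(m_n)\right)^2
=\sum_{i,j\le N}\ \sum_{p\in[Q,2Q]}\chi_p(m_im_j).
$$
Each $p\in\cP_f$ contributes $(-N)^2=N^2$, so $\Sigma\ge N^2\,P_f(Q)$. On the other hand the diagonal $i=j$ gives $\chi_p(m_i^2)=1$ and contributes $\sim N\,Q/\log Q$, whereas each of the $N^2$ off-diagonal terms is a character sum over primes to a modulus below a power of $\log Q$, hence $\ll Q(\log Q)^{-A}$ for any fixed $A$, which is negligible against $N\,Q/\log Q$. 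Thus $\Sigma\ll N\,Q/\log Q$, and
$$
P_f(Q)\le\frac{\Sigma}{N^2}\ll\frac1N\cdot\frac{Q}{\log Q}\ll\frac{1}{\log\log\log Q}\cdot\frac{Q}{\log Q}.
$$
The Selberg sieve enters exactly in the estimate of the character sum over almost-primes underlying this step: its weights replace the Brun-sieve treatment used in~\cite{MOS} and deliver the clean main term $\sim N\,Q/\log Q$ without the spurious $(\log\log\log\log Q)^{2+o(1)}$ factor appearing in~\eqref{eq:MOS bound}.

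The main obstacle is the tension built into the choice of $N$. The doubly-exponential growth $\log m_n\asymp d^{\,n}$ makes the moduli in the off-diagonal sums explode, so only $N\asymp\log\log\log Q$ iterates carry usable unconditional cancellation --- pushing $N$ further would require GRH, which is what underlies the stronger conditional bound~\eqref{eq:MOS bound GRH}. At the same time one must verify, from the non-preperiodicity of $\gamma$ together with the Diophantine input, that no product $m_im_j$ degenerates to a perfect square throughout this range, and one must absorb the bad primes --- those dividing some $m_n$ or the leading coefficients of the iterates --- into the error. Carrying out the Selberg-weighted character-sum estimate with error control uniform over the $N^2$ pairs is where the real work lies.
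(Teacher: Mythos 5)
Your second-moment skeleton (square the sum of quadratic symbols along the orbit, bound $P_f(Q)$ by $\Sigma/N^2$, isolate the diagonal) is exactly the paper's, but the way you control the off-diagonal terms has a genuine gap, and it leads you to misplace both the parameter choice and the role of the Selberg sieve. You take the initial segment $1\le n\le N$ with $N\asymp\log\log\log Q$ and assert that the Diophantine input guarantees $m_im_j$ is never a square for $i\ne j\le N$. The effective result actually available (B\'erczes--Evertse--Gy\H{o}ry, Lemma~\ref{lemma:BEG}), applied to the hyperelliptic equation $y^2=u\,x f^{(j-i)}(x)$, bounds the height of $\alpha=f^{(i)}(\gamma)$ by $\exp(O(d^{4(j-i)}))$, while the lower bound coming from non-preperiodicity of $\gamma$ is only $h(\alpha)\gg d^{\,i}$; a contradiction therefore requires $i\gg d^{4(j-i)}$, i.e.\ the gap $j-i$ must be at most about $c\log(\min(i,j))$. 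For a pair such as $i=1$, $j=N$ no contradiction is obtained, so the non-square claim on the whole initial segment is unsupported by these methods. This is precisely why the paper works in a short window $[N,N+t]$ far from the origin, with $N\asymp\log\log Q$ and $t\asymp\log\log\log Q$ satisfying $t\le c_1\log N$ (Lemma~\ref{lem:squaresP1P2}), and why the saving in the theorem is $1/t$, not $1/N$.

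Once the window is placed at height $N\asymp\log\log Q$, the moduli $|u_{n_1}u_{n_2}|$ are of size up to $\exp\bigl((\log Q)^{c}\bigr)$ --- far beyond any fixed power of $\log Q$ --- so the Siegel--Walfisz bound you invoke is inapplicable (and would in any case be ineffective, contrary to the paper's conventions on implied constants). This is where the Selberg sieve genuinely enters: the sum over primes is majorised by a Selberg-weighted sum over all integers $q\in[Q,2Q]$ coprime to $\prod_{p\le Q^{1/4}}p$, the symbols are flipped by reciprocity (after a Dirichlet-principle reduction fixing $u_n\bmod 4$ and $\nu_n\bmod 2$ so that the reciprocity signs are coherent across the window), and the resulting complete sums $\sum_{q\in[Q,2Q],\,e\mid q}\left(\frac{q}{u_{n_1}u_{n_2}}\right)$ are estimated by P\'olya--Vinogradov, which only needs $|u_{n_1}u_{n_2}|\le Q^{1/2}$ and the non-square property. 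In your set-up, where the moduli really were polylogarithmic, no sieve would be needed at all, so your closing paragraph attributes to the Selberg weights a role they do not play. To repair the argument you would need either a uniform non-square statement on initial segments (not available from the cited effective Diophantine results) or to adopt the paper's window-plus-sieve structure.
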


Obviously all quadratic polynomials are in the class $\cP_1$. 
Besides, it has been shown in~\cite[Section~1.2]{MOS} 
that polynomials of the form 
$$
f(X)=r(X-u)^d+s(X-u)^{d-1}+t\in \Z[X]
$$ 
with some $r,s,t,u \in \Z$, $r \ne 0$, of even degree $d$  such that 
$$\gamma=u-\frac{(d-1)s}{dr} 
$$
is not a pre-periodic point of $f$, belong also to the class $\cP_1$ 
(including the choice $s= 0$, that is, $f(X)=r(X-u)^d +t$).

Next we define the following sub-class of  polynomials from $\cP_1$, which we denote by $\cP_3$:
\begin{itemize}
\item[$\cP_3$:]  
 $f \in \Z[X]$ of degree $d\ge 2$ is such that $f \in \cP_1$ and 
 such that $0$ is pre-periodic under $f$.  
\end{itemize}  

For polynomials  $f \in \cP_3$ we obtain a much stronger bound than in~\eqref{eq:MOS bound}  and
in Theorem~\ref{thm:P1 P2}. 

\begin{theorem}
\label{thm:P3}
Let $f\in \cP_3$. Assume that   
$\gamma$, defined above for  the class $\cP_3$, is not a pre-periodic point of $f$. 
Then  one has 
$$
 P_f(Q)  \ll  \frac{1} {\log\log Q}  \cdot \frac{Q }{\log Q}.
$$
\end{theorem}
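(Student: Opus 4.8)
The plan is to combine the standard dynamical irreducibility criterion with a second-moment (variance) argument, whose decisive unconditional input is the Selberg-sieve bound for character sums over primes announced above. First I would record the reduction underlying all results of this type. For $f\in\cP_1$ the derivative has the shape \eqref{eq:f'shape}, so $\gamma$ is the unique critical point of odd multiplicity, and $\bar{f_p}$ is dynamically irreducible over $\Fp$ only if each element of the adjusted critical orbit
\begin{equation*}
a_n = c_n\, f^{(n)}(\gamma),\qquad n\ge 1,
\end{equation*}
with explicit, multiplicatively simple constants $c_n$ depending only on the leading data of $f$, is a quadratic non-residue modulo $p$. Dropping the irreducibility of $\bar{f_p}$ only enlarges the set, so it suffices to bound the number of $p\in[Q,2Q]$ with $\left(\frac{a_n}{p}\right)=-1$ for all relevant $n$. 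Since $\gamma$ is not pre-periodic, its canonical height is positive and $\log|a_n|=d^{\,n}\hat h(\gamma)+O(1)$ grows doubly exponentially; consequently the largest index that keeps the moduli within reach of the character-sum estimates is $N\asymp\log\log Q$, and this exponent will govern the final saving.

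The structural gain from the subclass $\cP_3$ is an essentially elementary near-coprimality of the critical orbit. Writing $f^{(n)}(\gamma)=f^{(n-m)}\!\left(f^{(m)}(\gamma)\right)$ and reducing modulo $f^{(m)}(\gamma)$ gives
\begin{equation*}
\gcd\left(f^{(n)}(\gamma),\,f^{(m)}(\gamma)\right)\ \mid\ f^{(n-m)}(0)\qquad (n>m),
\end{equation*}
and because $0$ is pre-periodic under $f$ the set $\{f^{(k)}(0):k\ge 1\}$ is finite, so these gcds are bounded by an absolute constant. Hence for every set $S$ the product $\prod_{n\in S}a_n$ is a bounded factor times a squarefree integer with $\asymp\sum_{n\in S}\omega(a_n)$ distinct prime factors; in particular it is non-square except in degenerate, explicitly listable cases. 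This single observation simultaneously supplies the non-squareness previously extracted from the Diophantine bounds of \cite{BEG} and the almost-squarefree structure previously obtained from the square sieve, and it does so over the whole range $n\le N$ with $N\asymp\log\log Q$ rather than only $N\asymp\log\log\log Q$ as in \cite{MOS}.

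With these ingredients I would run the variance method. Put $R(p)=\#\{n\le N:\ a_n\ \text{is a square mod }p\}=\sum_{n\le N}\tfrac12\left(1+\left(\frac{a_n}{p}\right)\right)$, after discarding the negligibly few $p$ dividing some $a_n$, so that $R(p)=0$ for every $p\in\cP_f$. Expanding the second moment about the mean $N/2$,
\begin{equation*}
\sum_{Q\le p\le 2Q}\left(R(p)-\tfrac N2\right)^2=\frac14\sum_{n,m\le N}\ \sum_{Q\le p\le 2Q}\left(\frac{a_na_m}{p}\right),
\end{equation*}
the diagonal $n=m$ contributes $\asymp N\,\pi(Q)$, while each off-diagonal term is the sum of a non-principal quadratic character over the primes of $[Q,2Q]$, non-principal precisely because $a_na_m$ is not a square by the previous paragraph. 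Feeding these into the Selberg-sieve estimate for character sums over primes, which for the almost-squarefree moduli produced here gains a factor of size $o(1/\log\log Q)$, makes the off-diagonal total $o\!\left(N\,\pi(Q)\right)$. Since $p\in\cP_f$ forces $\left(R(p)-\tfrac N2\right)^2=N^2/4$, we obtain
\begin{equation*}
P_f(Q)\cdot\frac{N^2}{4}\ \le\ \sum_{Q\le p\le 2Q}\left(R(p)-\tfrac N2\right)^2\ \ll\ N\,\pi(Q),
\end{equation*}
whence $P_f(Q)\ll \pi(Q)/N\asymp \frac{1}{\log\log Q}\cdot\frac{Q}{\log Q}$, as required.

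The hard part will be the off-diagonal character-sum estimate, and it is there that the Selberg sieve does the real work: the moduli $a_na_m$ are far too large, relative to the short range $[Q,2Q]$, for Pólya–Vinogradov or Burgess to yield any cancellation, so one must extract it from the prime factorisation of $a_na_m$, uniformly over all $\binom{N}{2}$ pairs up to $N\asymp\log\log Q$. It is exactly the near-coprimality afforded by the hypothesis that $0$ is pre-periodic, guaranteeing squarefreeness with many distinct prime factors and non-squareness of the products, that renders this uniform control available unconditionally and lets the unconditional bound match the GRH-quality saving \eqref{eq:MOS bound GRH}; under GRH the same variance scheme applies to the larger class of Theorem~\ref{thm:P1 P2}, with the off-diagonal sums bounded directly and only the non-squareness of $a_na_m$ required.
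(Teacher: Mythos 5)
Your overall architecture --- reduce to quadratic (non)residuosity of the critical orbit via Lemma~\ref{lemma:stab}, form the second moment over $p\in[Q,2Q]$, let the diagonal give $N\pi(Q)$ and kill the off-diagonal with a Selberg-sieve-assisted character sum estimate, with $N\asymp\log\log Q$ dictated by the doubly exponential growth of the moduli --- is exactly the paper's. But there is a genuine gap at the step you call ``essentially elementary near-coprimality.'' The gcd observation $\gcd\bigl(f^{(n)}(\gamma),f^{(m)}(\gamma)\bigr)\mid f^{(n-m)}(0)$ is indeed the structural gain from $0$ being pre-periodic (it is what Lemma~\ref{lem:GCD} packages), but it does \emph{not} yield that $\prod_{n\in S}a_n$ is a bounded factor times a squarefree integer, nor that $a_na_m$ is a non-square. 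Pairwise near-coprimality controls the primes shared between different $a_n$'s; it says nothing about the multiplicity of primes inside a single $a_n$. What it actually gives is a reduction: if $a_na_m$ is a square and $\gcd(a_n,a_m)$ is bounded (or an $\cS$-unit), then each of $a_n,a_m$ is a bounded/$\cS$-unit factor times a perfect square. Ruling \emph{that} out for all large $m$ is precisely the hard Diophantine content, and it is not ``explicitly listable'' by inspection: a priori $f^{(m)}(\gamma)$ could be an $\cS$-unit times a square for infinitely many $m$. The paper handles this by writing $f^{(m)}(\gamma)=f^{(2)}\bigl(f^{(m-2)}(\gamma)\bigr)=s_0a^2$ and invoking the effective theorem of B\'erczes--Evertse--Gy\H{o}ry (Lemma~\ref{lemma:BEG}) to bound the height of $f^{(m-2)}(\gamma)$, which together with the non-pre-periodicity of $\gamma$ forces $m\le c_2$. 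Your proposal explicitly claims to dispense with~\cite{BEG}; as written, the non-squareness of the off-diagonal moduli --- the input without which every off-diagonal character is principal and the variance argument collapses --- is unproved.

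A secondary inaccuracy: the Selberg sieve's role is not to exploit ``almost-squarefree moduli'' and it does not gain a factor $o(1/\log\log Q)$ on individual prime sums. In the paper one majorizes the indicator of $q$ free of prime factors $\le z=Q^{1/4}$ by $\sum_{e\mid q}\lambda_e^+$, uses positivity of the expanded square to insert this majorant, bounds the diagonal by the Selberg upper bound $\ll Q/\log Q$, and estimates the off-diagonal by P\'olya--Vinogradov in the progressions $e\mid q$ (Lemma~\ref{lem:P-V}), which wins because $|u_{n_1}u_{n_2}|\le Q^{1/2}$ for the chosen $N$. This is repairable exposition; the missing Diophantine step is not.
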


To show that the class $\cP_3$ contains infinitely many dynamically irreducible polynomials, 
we note that for $f(X) = aX^d - acX^{d-1} + c \in \Z[X]$ we have $f(0) = c$ and $f(c) = c$. 
Hence $0$ is pre-periodic for $f$, thus $f \in \cP_3$. 
Now, for any integers $a$ and $c$ such that for some prime $p$ we have 
$$
p\nmid a, \qquad p \mid c,  \qquad p^2 \nmid c,
$$ 
one can easily verify that by the Eisenstein criterion $f$ is dynamically irreducible, see~\cite[Lemma~1.3]{Odoni}. 
Indeed, easy induction shows that  for any $n\ge 1$, the leading coefficient of $f^{(n)}$ 
is a power of $a$, all other coefficients are multiples of $c$ and 
$$
f^{(n)}(0) =c.
$$ 
We now give a conditional (on the GRH) estimate which  for polynomials  $f \in \cP_3$ 
is much stronger than~\eqref{eq:MOS bound GRH}.

\begin{theorem}
\label{thm:P3-GRH}
Let $f\in \cP_3$. Assume that   
$\gamma$, defined above for  the class $\cP_3$, is not a pre-periodic point of $f$. Then, assuming the GRH, one has
$$
 P_f(Q)  \ll \frac{Q }{(\log Q)^2}.
$$
\end{theorem}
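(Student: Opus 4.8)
The plan is to combine the standard dynamical irreducibility criterion with a multiplicative (character-sum) detection of quadratic non-residuosity, and to use the GRH to control the resulting incomplete character sums over primes. The heart of the matter will be a Diophantine lower bound on the number of multiplicatively independent (modulo squares) values in the forward orbit of the critical point $\gamma$.

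First I would recall the stability criterion in the form needed here. Write $c_n=f^{(n)}(\gamma)$ for the forward orbit of $\gamma$. The hypothesis $f\in\cP_1$, that is $f'=g^2(aX+b)$, ensures that in the recursive formula for $\Disc{f^{(n)}}$ the factor $g$ enters only through a perfect square $\Res{f^{(n)},g\circ f^{(k)}}^2$, so that the square class of the relevant resultant is governed solely by $\Res{f^{(n)},X-\gamma}$, hence by $c_{n}$. The extra hypothesis defining $\cP_3$, namely that $0$ is pre-periodic, keeps the leading-coefficient and constant-term contributions bounded (the orbit $\{f^{(n)}(0)\}$ being finite), so that the square class of the incremental discriminant at level $n$ is that of a fixed integer $\Delta_n$ whose squarefree part is, up to bounded factors, that of $c_n$. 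The conclusion is that if $\bar{f_p}$ is dynamically irreducible, then $\left(\frac{\Delta_n}{p}\right)=-1$ for every $n\ge1$; consequently, for any parameter $N$,
\begin{equation*}
P_f(Q)\le\#\left\{p\in[Q,2Q]:\ \left(\tfrac{\Delta_n}{p}\right)=-1\ \text{for all}\ 1\le n\le N\right\}.
\end{equation*}

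Next I would detect this condition multiplicatively. Using $\prod_{n=1}^N\frac{1-(\Delta_n/p)}{2}$ as the indicator and expanding, one obtains
\begin{equation*}
P_f(Q)\le 2^{-N}\sum_{S\subseteq\{1,\dots,N\}}(-1)^{|S|}\sum_{Q\le p\le 2Q}\left(\frac{m_S}{p}\right),\qquad m_S=\prod_{n\in S}\Delta_n,
\end{equation*}
up to the negligibly many primes dividing some $\Delta_n$. The subsets $S$ for which $m_S$ is a perfect square yield the main term: their number is $2^{N-r}$, where $r=\dim_{\F_2}\langle\Delta_1,\dots,\Delta_N\rangle$ is the dimension of the span of the square classes in $\Q^\times/(\Q^\times)^2$, so their total contribution is $\ll 2^{-r}\,Q/\log Q$. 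For every remaining $S$ the inner sum is a genuine non-principal quadratic character sum over primes, and here the GRH enters, giving $\sum_{Q\le p\le 2Q}(m_S/p)\ll Q^{1/2}\log(|m_S|Q)$. Since $\log|\Delta_n|\ll d^{\,n}$, one has $\log|m_S|\ll d^{\,N}$, and the factor $2^{-N}$ cancels the number of subsets, so the total error is $\ll Q^{1/2}d^{\,N}$.

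The decisive step, and the main obstacle, is to produce many multiplicatively independent square classes, that is a lower bound $r\gg\log_2\log Q$ for a value of $N$ small enough that $Q^{1/2}d^{\,N}$ remains negligible. I would establish this by showing that, for $n$ in the relevant range, $c_n=f^{(n)}(\gamma)$ acquires a primitive prime divisor to an odd power, namely a prime dividing $\Delta_n$ but no earlier $\Delta_m$, which forces the corresponding $\Delta_n$ to be independent in $\Q^\times/(\Q^\times)^2$. This is exactly where the standing hypotheses are used: $\gamma$ not being pre-periodic makes the orbit $\{c_n\}$ infinite and of strictly increasing height, while $0$ being pre-periodic, the defining feature of $\cP_3$ and the reason the bound here beats the one available for $\cP_1\cup\cP_2$, bounds the repeated prime factors, so that the requisite primitive divisors can be extracted effectively from the Diophantine input underlying the unconditional bounds. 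Taking $N\asymp\log_2\log Q$ then gives $r\gg\log_2\log Q$, hence $2^{-r}\ll 1/\log Q$, while $Q^{1/2}d^{\,N}\ll Q^{1/2}(\log Q)^{O(1)}=o(Q/(\log Q)^2)$; combining the two estimates yields $P_f(Q)\ll 2^{-r}Q/\log Q\ll Q/(\log Q)^2$, as claimed.
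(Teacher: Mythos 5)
Your overall architecture (stability criterion $\Rightarrow$ character conditions at every level $\Rightarrow$ sieve out primes in $[Q,2Q]$) matches the paper's, but your detection mechanism is genuinely different and it is exactly there that the argument breaks. The paper uses a second moment: it bounds $P_f(Q)$ by $S/t^2$ with $S=\sum_{p\in[Q,2Q]}\bigl(\sum_{n\in\cM}\bigl(\tfrac{f_d\Res{f^{(n)},f'}}{p}\bigr)\bigr)^2$, expands the square, and only ever needs that the \emph{pairwise} products $u_{n_1}u_{n_2}$ ($n_1\ne n_2$) are non-squares; that pairwise statement is what Lemma~\ref{lem:squaresP3} delivers, via Lemma~\ref{lem:GCD} and the effective hyperelliptic bound of Lemma~\ref{lemma:BEG} applied to $f^{(m)}(\gamma)=s_0a^2$. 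Under GRH one then takes $t\asymp\log Q$ and bounds the off-diagonal terms by Lemma~\ref{lemma:char sum-GRH}, giving $P_f(Q)\ll t^{-1}Q/\log Q+Q^{5/6}$. Your expansion of the full product indicator $\prod_{n\le N}\tfrac12(1-(\Delta_n/p))$ over all $2^N$ subsets instead requires control of the entire $\F_2$-span of the square classes: the main term is $2^{-r}Q/\log Q$ with $r=\dim_{\F_2}\langle\Delta_1,\dots,\Delta_N\rangle$, and you need $r\ge\log_2\log Q$. Pairwise non-squareness of products — the only thing the Diophantine input actually yields — is far weaker than this: $N$ pairwise-independent square classes can span a space of dimension as small as about $\log_2 N$, which with $N\asymp\log\log Q$ would give only $2^{-r}\ll 1/\log\log Q$, i.e.\ the bound of Theorem~\ref{thm:P3}, not the GRH improvement.

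The step you yourself flag as decisive — that $f^{(n)}(\gamma)$ acquires, for each $n$ in the range, a primitive prime divisor to an \emph{odd} power outside the fixed exceptional set $\cS$ — is asserted but not proved, and it does not follow from ``the Diophantine input underlying the unconditional bounds.'' Lemma~\ref{lemma:BEG} controls when a \emph{single} value $f^{(m)}(\gamma)$ is a square times an $\cS$-unit; it says nothing about products of three or more terms of the orbit, and existence of primitive prime divisors (let alone of odd multiplicity) in orbits $\{f^{(n)}(\gamma)\}$ is a hard dynamical Zsygmondy-type problem that is open in this generality. So as written your proof has a genuine gap precisely at its main term. A secondary, easily repaired point: for odd degree $d$ Lemma~\ref{lemma:stab} forces the relevant symbols to equal $+1$ rather than $-1$, so your indicator would need the opposite sign; the paper's second-moment formulation is insensitive to this.
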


\section{Preliminaries}

 \subsection{Notation, general  conventions and definitions}
\label{sec:note}

We use the Landau symbol $O$ and the Vinogradov symbol $\ll$. Recall that the
assertions $U=O(V)$ and $U \ll V$  are both equivalent to the inequality $|U|\le cV$ 
with some (effective) constant $c>0$ that may depend on the involved polynomial $f$.

Throughout the paper, $p$ always denotes a prime number. 
When we say that $u\in\Z$  is a square we mean that $u$ is a perfect  integer square.

Given  polynomials  $g,h\in\Q[X]$ we use $\Res{g,h}$ to denote their resultant.

We define the {\it Weil logarithmic height\/} of $a/b\in\Q$ as $$h(a/b)=\max \{ \log |a|,\log|b|\},$$
with the convention $h(0)=0$.

We always assume that $Q$ is large enough so that various iterated logarithms are 
well-defined.

\subsection{Jacobi symbols and their sums}
For $n\geq 3$, $\left(\frac{\cdot}{n}\right)$ denotes the {\it Jacobi symbol\/}, which is identical to the {\it Legendre symbol\/}  if $n$ is prime. We recall the following well-known properties, see~\cite[Section~3.5]{IwKow}.

\begin{lemma}\label{lemma:reciprocity}
 For odd integers $m,n\geq 3$ we have
$$
  \left(\frac{m}{n}\right)=(-1)^{\frac{m-1}{2}\frac{n-1}{2}}\left(\frac{n}{m}\right)
  \mand
  \left(\frac{2}{n}\right)=(-1)^{\frac{n^2-1}{8}}.
  $$
\end{lemma}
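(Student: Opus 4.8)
The plan is to reduce both identities to the corresponding statements for the Legendre symbol --- that is, to the classical Gauss reciprocity law and its supplement for odd primes, which I take as the known prime case --- and then to bootstrap to the Jacobi symbol through its multiplicativity in the lower entry. Writing $n=q_1\cdots q_s$ and $m=p_1\cdots p_r$ as products of (not necessarily distinct) odd primes, the definition of the Jacobi symbol gives $\left(\frac{m}{n}\right)=\prod_{j=1}^{s}\left(\frac{m}{q_j}\right)=\prod_{i,j}\left(\frac{p_i}{q_j}\right)$, and symmetrically for $\left(\frac{n}{m}\right)$.

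First I would dispose of the case $\gcd(m,n)>1$: then some prime divides both, so at least one factor $\left(\frac{m}{q_j}\right)$ vanishes, forcing $\left(\frac{m}{n}\right)=0$, and symmetrically $\left(\frac{n}{m}\right)=0$, so the reciprocity identity holds trivially. Thus I may assume $m,n$ coprime, in which case $p_i\ne q_j$ for all $i,j$ and Legendre reciprocity applies to each pair.

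The key elementary input, which I expect to be the main (though modest) technical point, is the additivity modulo $2$ of the relevant exponents under multiplication of odd integers: for odd $a,b$ one has
$$
\frac{ab-1}{2}\equiv \frac{a-1}{2}+\frac{b-1}{2}\pmod 2 \mand \frac{(ab)^2-1}{8}\equiv \frac{a^2-1}{8}+\frac{b^2-1}{8}\pmod 2.
$$
These follow from $(a-1)(b-1)\equiv 0\pmod 4$ and $(a^2-1)(b^2-1)\equiv 0\pmod{16}$, respectively, and by induction extend to any finite product, giving $\sum_i\frac{p_i-1}{2}\equiv\frac{m-1}{2}$, $\sum_j\frac{q_j-1}{2}\equiv\frac{n-1}{2}$, and $\sum_j\frac{q_j^2-1}{8}\equiv\frac{n^2-1}{8}$, all modulo $2$.

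For reciprocity proper, applying the Legendre law to each pair and collecting the signs yields
$$
\left(\frac{m}{n}\right)=(-1)^{\sum_{i,j}\frac{p_i-1}{2}\frac{q_j-1}{2}}\prod_{i,j}\left(\frac{q_j}{p_i}\right)=(-1)^{\left(\sum_i\frac{p_i-1}{2}\right)\left(\sum_j\frac{q_j-1}{2}\right)}\left(\frac{n}{m}\right),
$$
whereupon the additivity lemma rewrites the exponent as $\frac{m-1}{2}\frac{n-1}{2}$, as required. For the supplement, $\left(\frac{2}{n}\right)=\prod_j\left(\frac{2}{q_j}\right)=(-1)^{\sum_j(q_j^2-1)/8}$ by the Legendre supplement, and the additivity lemma replaces the exponent by $(n^2-1)/8$. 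The only care needed throughout is to track everything modulo $2$ and to verify the induction in the additivity lemma, which is routine.
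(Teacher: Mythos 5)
Your proof is correct; the paper itself gives no argument for this lemma, simply recalling it as a standard fact with a citation to Iwaniec--Kowalski, Section~3.5. Your derivation --- multiplicativity of the Jacobi symbol in the lower entry, the trivial disposal of the case $\gcd(m,n)>1$, and the additivity modulo $2$ of the exponents $\frac{a-1}{2}$ and $\frac{a^2-1}{8}$ under multiplication of odd integers --- is exactly the standard textbook reduction to Legendre reciprocity and its supplement, and all the steps check out.
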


Next, we need the following modification of the classical P\'olya--Vinogradov bound of character sums.

\begin{lemma}\label{lem:P-V}
For any integers $e, K\ge 1$, and an odd integer $v\geq 3$ which is  not a   square, we have
$$
\sum_{\substack{k =1\\ e\mid k}}^K \left(\frac{k}{v}\right) \ll  v^{1/2}  \log v.
$$
\end{lemma}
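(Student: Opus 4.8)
The plan is to separate the divisibility condition $e\mid k$ from the underlying P\'olya--Vinogradov estimate by exploiting the complete multiplicativity of the Jacobi symbol in its numerator. First I would write every $k$ counted by the sum as $k=e\ell$ with $1\le \ell\le \lfloor K/e\rfloor$, so that
$$
\sum_{\substack{k=1\\ e\mid k}}^{K}\left(\frac{k}{v}\right)
=\sum_{\ell=1}^{\lfloor K/e\rfloor}\left(\frac{e\ell}{v}\right)
=\left(\frac{e}{v}\right)\sum_{\ell=1}^{L}\left(\frac{\ell}{v}\right),
\qquad L=\lfloor K/e\rfloor .
$$
If $\gcd(e,v)>1$ then $\left(\frac{e}{v}\right)=0$ and the whole sum vanishes, so the bound is trivial; otherwise $\left|\left(\frac{e}{v}\right)\right|=1$ and the problem reduces to bounding the initial-segment sum $\sum_{\ell\le L}\left(\frac{\ell}{v}\right)$ uniformly in $L$. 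Thus the divisibility constraint, which is the only genuine novelty relative to the classical statement, is removed essentially for free.

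Next I would recognise $\chi(\ell)=\left(\frac{\ell}{v}\right)$ as a Dirichlet character modulo $v$: it is completely multiplicative and $v$-periodic in the numerator, and it is \emph{non-principal} precisely because $v$ is not a square. Indeed, writing $v=\prod_i p_i^{a_i}$, on the units one has $\chi=\prod_{a_i\ \mathrm{odd}}\left(\frac{\cdot}{p_i}\right)$, and since $v$ is not a square at least one exponent $a_i$ is odd; choosing, via the Chinese Remainder Theorem, an $n$ coprime to $v$ that is a quadratic nonresidue modulo one such $p_i$ and a quadratic residue modulo all the other prime divisors of $v$ gives $\chi(n)=-1$, so $\chi$ is non-trivial. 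With non-principality in hand, the classical P\'olya--Vinogradov inequality (see, e.g.,~\cite{MonVau}) yields $\left|\sum_{\ell\le L}\chi(\ell)\right|\ll v^{1/2}\log v$, uniformly in $L$, which is exactly the asserted bound.

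The one point that genuinely needs care---and which I expect to be the main obstacle---is that $\chi$ may be \emph{imprimitive} modulo $v$: its conductor is $f=\prod_{a_i\ \mathrm{odd}}p_i$, which can be much smaller than $v$, so a direct appeal to the primitive form of P\'olya--Vinogradov only controls the sum in terms of $f$. To recover the clean bound $v^{1/2}\log v$ one must handle the coprimality condition relating $\chi$ to its inducing primitive character $\chi^{*}$ modulo $f$. I would do this by letting $r=\prod_{p\mid v,\ p\nmid f}p$ be the radical of the part of $v$ supported on even exponents, removing the condition $\gcd(\ell,r)=1$ by M\"obius inversion, and applying the primitive bound $\ll f^{1/2}\log f$ to each of the $2^{\omega(r)}$ resulting subsums. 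The losses reassemble correctly because every prime contributing to $r$ divides $v$ to an exponent $\ge 2$, whence $v\ge f r^{2}$ and $2^{\omega(r)}\le r\le (v/f)^{1/2}$; consequently $2^{\omega(r)} f^{1/2}\log f\le v^{1/2}\log v$. Alternatively, one may simply invoke a ready-made P\'olya--Vinogradov inequality stated directly for Jacobi symbols of non-square odd modulus, which packages exactly this computation.
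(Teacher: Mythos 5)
Your proof is correct and follows essentially the same route as the paper: factor out $\left(\frac{e}{v}\right)$ by multiplicativity (noting the sum vanishes if $\gcd(e,v)>1$) and then invoke the P\'olya--Vinogradov inequality for the non-principal character $\ell\mapsto\left(\frac{\ell}{v}\right)$. The only difference is that you spell out what the paper delegates to the cited references, namely the verification that non-squareness of $v$ makes the character non-principal and the M\"obius reduction from the imprimitive character modulo $v$ to its primitive inducer, using $v\ge f r^2$ to absorb the $2^{\omega(r)}$ loss; both steps are carried out correctly.
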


\begin{proof} If $\gcd(e,v)>1$ then all Jacobi symbols are vanishing.
Otherwise using the multiplicativity of the Jacobi symbol we write 
$$
\sum_{\substack{k =1\\ e\mid k}}^K \left(\frac{k}{v}\right)
=   \left(\frac{e}{v}\right)\sum_{1 \le m \le K/e}  \left(\frac{m}{v}\right)
$$
and the result follows from  the classical P\'olya--Vinogradov bound, 
see, for example,~\cite[Theorem~12.5]{IwKow} or~\cite[Theorem~9.18]{MonVau}. 
\end{proof}

We also recall that 
under the GRH we have a rather strong bound for sums over primes, 
see~\cite[Equation~(13.21)]{MonVau}.

\begin{lemma}\label{lemma:char sum-GRH}
For any   positive integers $q$ and $M$, where $q\geq 2$ is not a perfect square, we have
$$
\left|\sum_{p \le M}\left(\frac{p}{q}\right) \right|   \ll M^{1/2} \log (qM). 
$$
\end{lemma}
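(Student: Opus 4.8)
The plan is to recognise the sum as a character sum over primes for a non-principal Dirichlet character, and then to invoke the conditional (GRH) estimate already quoted from \cite{MonVau}. First I would use the standard properties of the Jacobi symbol (see \cite[Section~3.5]{IwKow}): writing $q=\prod_i p_i^{a_i}$ for the prime factorisation of the odd modulus $q$, the definition of the Jacobi symbol in terms of Legendre symbols gives
$$
\left(\frac{n}{q}\right)=\prod_i\left(\frac{n}{p_i}\right)^{a_i}=\prod_{i:\,a_i\ \mathrm{odd}}\left(\frac{n}{p_i}\right),
$$
so that $\chi(n):=\left(\frac{n}{q}\right)$ is a real Dirichlet character modulo $q$ (for even $q$ one reads this as the corresponding Kronecker symbol, a real character to a modulus dividing $4q$). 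The decisive observation is that $\chi$ is the principal character precisely when every exponent $a_i$ is even, that is, exactly when $q$ is a perfect square. Hence the hypothesis that $q$ is \emph{not} a perfect square is exactly what forces $\chi$ to be non-principal, and this non-principality is the only feature of $q$ that the argument will exploit, mirroring the role of ``$v$ is not a square'' in the unconditional Lemma~\ref{lem:P-V}.

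With $\chi$ a non-principal character modulo $q$, I would then apply the GRH to the associated Dirichlet $L$-function $L(s,\chi)$. The bound \cite[Equation~(13.21)]{MonVau} is stated precisely for such sums over primes and delivers, conditionally on the GRH, the claimed estimate $\left|\sum_{p\le M}\chi(p)\right|\ll M^{1/2}\log(qM)$. If instead one prefers to quote the conditional bound in the von Mangoldt form $\psi(M,\chi)=\sum_{n\le M}\Lambda(n)\chi(n)\ll M^{1/2}(\log qM)^2$, then the passage to the prime-indicator sum is routine: the contribution of proper prime powers is $O(M^{1/2})$, and summation by parts against the weight $1/\log t$ converts $\sum_{p\le M}\chi(p)\log p$ into $\sum_{p\le M}\chi(p)$, which in the ranges of $q$ relevant to the applications loses at most a bounded factor and reproduces the stated bound.

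The step I expect to be the main (indeed essentially the only) piece of genuine content is the verification of non-principality together with the identification of the conductor of $\chi$, so that the GRH is invoked for the correct $L$-function; this is precisely where the assumption that $q$ is not a perfect square enters. A secondary, purely technical point is the even-$q$ case, since the Jacobi symbol is classically defined only for odd moduli: there I would replace it by the Kronecker symbol and check that the resulting real character is still non-principal and of conductor dividing $4q$, after which the factor $\log(qM)$ comfortably absorbs the bounded change of modulus and the same GRH bound applies.
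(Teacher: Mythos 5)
Your proposal is correct and matches the paper's approach: the paper gives no proof beyond citing \cite[Equation~(13.21)]{MonVau}, and your argument (Jacobi symbol as a real Dirichlet character, non-principal precisely because $q$ is not a perfect square, then the GRH bound for character sums over primes) is exactly the standard unpacking of that citation.
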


 \subsection{Squares modulo $\cS$-units}
 
Given a finite set of non-Archimedean valuations $\cS$ of a number field $\K$
we say that that $\alpha \in \K^*$ is an $\cS$-integer if $\nu (\alpha) \ge 0$ 
for any valuation $\nu \not \in \cS$. 
Furthermore, we say that $u \in \K^*$ is an $\cS$-unit if $\nu (u) = 0$ 
for any valuation $\nu \not \in \cS$. 

\begin{lemma}\label{lem:GCD}
Let  $\cS$ be  a finite set of non-Archimedean valuations of a number field $\K$.
Let $\alpha, \beta, \vartheta\in  \K^*$ be $\cS$-integers and let $s_1, s_2 \in \K^*$ be 
 $\cS$-units  such that 
$$
\alpha (\alpha \beta + s_1) = s_2 \vartheta^2.
$$
Then $\alpha = s_0  a^2$ for some $\cS$-unit  $s_0 \in \K^*$ and an algebraic 
integer $a \in \K^*$. 
\end{lemma}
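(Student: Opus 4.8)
The plan is to read off the parities of the valuations of $\alpha$ at all places outside $\cS$ directly from the identity, to package this as a statement about the ideal generated by $\alpha$ in the ring of $\cS$-integers, and then to realise that (square) ideal as the square of a \emph{principal} ideal generated by an algebraic integer.

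First I would set $\delta = \alpha\beta + s_1$, which is an $\cS$-integer since $\alpha,\beta$ are $\cS$-integers and $s_1$ is an $\cS$-unit, hence an $\cS$-integer. Fix a valuation $\nu \notin \cS$; the claim is that $\nu(\alpha)$ is even. Since $\alpha$ is an $\cS$-integer we have $\nu(\alpha)\ge 0$, so there is nothing to prove when $\nu(\alpha)=0$. If $\nu(\alpha)>0$, then, using $\nu(\beta)\ge 0$ and $\nu(s_1)=0$, one gets $\nu(\alpha\beta)\ge\nu(\alpha)>0=\nu(s_1)$, so that the ultrametric inequality is an equality and $\nu(\delta)=0$. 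Applying $\nu$ to $\alpha\delta = s_2\vartheta^2$ and using $\nu(s_2)=0$ then yields $\nu(\alpha)=\nu(\alpha)+\nu(\delta)=2\nu(\vartheta)$, which is even. Hence $\nu(\alpha)$ is an even nonnegative integer for every $\nu\notin\cS$.

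Next I would pass to ideals. The evenness just established means that the fractional ideal of $\alpha$, restricted to the primes outside $\cS$, is the square of an integral ideal; equivalently, in the ring of $\cS$-integers $\cO_{\K,\cS}$ one has $\alpha\,\cO_{\K,\cS}=\mathfrak{a}^2$, where $\mathfrak{a}=\prod_{\frp\notin\cS}\frp^{\nu_\frp(\alpha)/2}$. To turn $\mathfrak{a}$ into a principal ideal I would, as is standard in $\cS$-unit arguments, enlarge $\cS$ by finitely many primes so that $\cO_{\K,\cS}$ becomes a principal ideal domain; this is harmless, since all hypotheses persist for a larger $\cS$ and only finitely many primes are ever affected. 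Then $\mathfrak{a}=a_0\,\cO_{\K,\cS}$ for some $a_0\in\cO_{\K,\cS}$, whence $\alpha\,\cO_{\K,\cS}=a_0^2\,\cO_{\K,\cS}$ and therefore $\alpha=s_0' a_0^2$ for some $\cS$-unit $s_0'$.

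Finally I would promote $a_0$ to an algebraic integer. By construction $\nu_\frp(a_0)=\nu_\frp(\alpha)/2\ge 0$ for $\frp\notin\cS$, so $a_0$ can fail to be integral only at the finitely many primes of $\cS$. For each $\frp\in\cS$, writing $\frp^{h}=(\pi_\frp)$ with $h$ a multiple of the class number gives an $\cS$-unit $\pi_\frp$ with $\nu_\frp(\pi_\frp)>0$ and $\nu_{\frp'}(\pi_\frp)=0$ for $\frp'\ne\frp$; a suitable product $u=\prod_{\frp\in\cS}\pi_\frp^{k_\frp}$ then makes $a:=u a_0$ integral at every place, that is, an algebraic integer, while $\alpha=(s_0'u^{-2})a^2$ with $s_0:=s_0'u^{-2}$ still an $\cS$-unit. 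The main obstacle is exactly the passage from the square ideal $\mathfrak{a}^2$ to the square of a principal ideal: the class of $\mathfrak{a}$ is $2$-torsion but need not be trivial, which is why the enlargement of $\cS$ to absorb the class group is the crucial and only non-formal step; everything else is a routine valuation computation.
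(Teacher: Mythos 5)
Your proposal is correct, and its first half is essentially the paper's own argument: the paper establishes the evenness of $\nu(\alpha)$ for $\nu\notin\cS$ by rewriting the identity as $s_1\alpha = s_2\vartheta^2-\alpha^2\beta$, which is just a repackaging of your observation that $\nu(\alpha)>0$ forces $\nu(\alpha\beta+s_1)=0$ and hence $\nu(\alpha)=2\nu(\vartheta)$. Where you genuinely diverge is afterwards: the paper simply writes ``the result now follows,'' while you correctly point out that passing from the square ideal $\fra^2$ to the square of an element is obstructed by the class of $\fra$, and you resolve this by enlarging $\cS$ so that the ring of $\cS$-integers becomes a principal ideal domain, then clearing denominators at the primes of $\cS$ to make the generator an algebraic integer. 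This extra care is warranted over a general number field, but it comes with a caveat you should make explicit: after enlargement the factor $s_0$ is only a unit for the \emph{enlarged} set $\cS'$, whereas the lemma as displayed asks for an $\cS$-unit, so strictly speaking you prove a slightly weaker statement (and I do not see how to recover the stated one for arbitrary $\K$ without some hypothesis on the class of $\fra$). In the paper this issue is invisible because the lemma is only invoked with $\K=\Q$ (in the proof of Lemma~\ref{lem:squaresP3}), where every ideal is principal, no enlargement is needed, and evenness of $v_p(\alpha)$ for $p\notin\cS$ immediately yields $\alpha=s_0a^2$; your weaker $\cS'$-version would also suffice there, since $\cS$ is itself an auxiliary finite set that may be enlarged harmlessly. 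In short: your valuation computation matches the paper, your second half fills a step the paper leaves implicit, and the only flaw is the unremarked substitution of $\cS'$ for $\cS$ in the conclusion.
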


\begin{proof}It is easy to see that for any valuation $\nu \not \in \cS$, if 
$\nu(\alpha) > 0$ then  $\nu(\vartheta)>0$. Hence writing the above relation 
as $s_1  \alpha = s_2 \vartheta^2 - \alpha^2 \beta$, and recalling that $\nu(\beta)\ge 0$, we see that $\nu(\alpha)$ is 
even. The result now follows. 
\end{proof}

We also need the following effective result of B{\'e}rczes,   Evertse and Gy\H{o}ry~\cite[Theorem~2.2]{BEG}, which bounds the height  of $\cS$-integer solutions to a hyperelliptic equation. We present it in the form needed for the proof of Theorems~\ref{thm:P1 P2} and~\ref{thm:P3}. In particular, we formulate
it only over $\Q$, while the original result applies to arbitrary number fields.
 
Let $\cS$  be a finite set of primes of cardinality $s=\#\cS$ and define
$\Z_\cS$ to be the ring of $\cS$-integers, that is, the set of rational numbers $r$  with $v_p(r)\ge 0$ for any $p\not\in \cS$. 
Put
$$
Q_\cS=\prod_{p\in \cS} p.
$$

\begin{lemma}\label{lemma:BEG}
Let $f\in \Z_\cS[X]$ be a polynomial of degree $d\geq 3$ without multiple zeros, and let $b\in\Z_\cS$ 
be a nonzero $\cS$-integer. 
If $x,y\in \Z_\cS$ are solutions to the equation
$$
f(x)=by^2,
$$
then
$$
h(x),h(y)\leq (4d(s+1))^{212d^4 (s+1)} Q_S^{20 d^3} \exp(O(h(b))).
$$
\end{lemma}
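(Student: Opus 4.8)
The plan is to reduce the hyperelliptic equation $f(x)=by^2$ to a family of two-term $\cS$-unit equations and then invoke the theory of logarithmic forms. First I would pass to the splitting field $\K=\Q(\alpha_1,\dots,\alpha_d)$ of $f$, where $f(X)=f_d\prod_{i=1}^d(X-\alpha_i)$ with the $\alpha_i$ pairwise distinct (since $f$ has no multiple zeros), noting that $[\K:\Q]\le d!$ is controlled by $d$. I would then enlarge $\cS$ to a set $\cT$ of non-Archimedean valuations of $\K$ consisting of the places above those in $\cS$ together with all places dividing $f_d$, $b$ and the discriminant $\Disc{f}$ (equivalently all the differences $\alpha_i-\alpha_j$). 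The cardinality $\#\cT$ and the associated radical $Q_\cT$ are then bounded explicitly in terms of $s$, $d$, $Q_\cS$ and $h(b)$.

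The first key step is a greatest-common-divisor analysis in the spirit of Lemma~\ref{lem:GCD}: any valuation outside $\cT$ common to $x-\alpha_i$ and $x-\alpha_j$ must divide $\alpha_i-\alpha_j$, and since $\prod_i(x-\alpha_i)=f_d^{-1}by^2$ is a unit times a square, each individual factor must have even valuation outside $\cT$. Hence I would write
$$
x-\alpha_i=\beta_i z_i^2
$$
for a $\cT$-integer $z_i\in\K^*$ and a $\beta_i\in\K^*$ drawn from a fixed, effectively determined finite set of representatives of $\cT$-units modulo squares; in particular $h(\beta_i)$ is bounded in terms of the $\cT$-regulator and $\#\cT$.

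Next comes Siegel's identity. Choosing three distinct roots — possible precisely because $d\ge 3$ — one has the linear relation
$$
(\alpha_k-\alpha_j)(x-\alpha_i)+(\alpha_i-\alpha_k)(x-\alpha_j)+(\alpha_j-\alpha_i)(x-\alpha_k)=0.
$$
Substituting $x-\alpha_i=\beta_i z_i^2$ and dividing through by $(\alpha_j-\alpha_i)(x-\alpha_k)$, this becomes a two-term equation $\lambda u+\mu v=1$ in the $\cT$-units $u=z_i^2/z_k^2$ and $v=z_j^2/z_k^2$, where $\lambda,\mu\in\K^*$ have height controlled by the $\alpha$'s and the $\beta_i$. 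This is a classical $\cS$-unit equation, to which one applies effective lower bounds for linear forms in logarithms — Matveev's Archimedean estimate together with Yu's $p$-adic analogue — to obtain an explicit upper bound for $h(u)$ and $h(v)$, hence for $h(z_i)$. Unwinding the substitution then bounds $h(x-\alpha_i)$, and therefore $h(x)$ and $h(y)$.

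The hard part will be the fully quantitative bookkeeping in this last step, which is also the source of the explicit exponents $212\,d^4(s+1)$ and $20\,d^3$. One must control the invariants of $\K$ — its degree, discriminant, class number and regulator, and the number of places lying above $\cS$ — produce a fundamental system of $\cT$-units of bounded height, and track how each of these enters the linear-forms bound, all while isolating the dependence on $b$ so that it contributes only through the factor $\exp(O(h(b)))$. Making every implied constant effective and collapsing the various field-theoretic quantities back into the stated shape depending only on $d$, $s$ and $Q_\cS$ is where essentially all the difficulty lies; the Diophantine reduction itself is comparatively routine.
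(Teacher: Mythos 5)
You should first be aware that the paper does not prove this statement at all: it is imported verbatim (up to minor renormalisation, as the authors note, replacing $s$ by $s+1$ to account for the Archimedean place) from B\'erczes--Evertse--Gy\H{o}ry~\cite[Theorem~2.2]{BEG}, so the ``paper's own proof'' is a citation. Your outline is nevertheless a faithful description of the strategy actually used in that reference and in the classical effective theory going back to Baker: factor $f$ over its splitting field, enlarge $\cS$ to a set $\cT$ containing the places dividing $f_d$, $b$ and $\Disc{f}$, use a gcd argument to write $x-\alpha_i=\beta_i z_i^2$ with $\beta_i$ from a finite set of bounded height, feed three such factorisations into Siegel's identity to obtain a two-term $\cT$-unit equation, and resolve it via Matveev's and Yu's bounds for linear forms in (ordinary and $p$-adic) logarithms. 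So the route is the right one and matches the source.

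That said, as a proof of the stated lemma your proposal is incomplete in exactly the place where the lemma has content. The assertion being proved \emph{is} the explicit bound $(4d(s+1))^{212d^4(s+1)}Q_\cS^{20d^3}\exp(O(h(b)))$; the qualitative finiteness statement has been known since Baker. Everything you defer to ``quantitative bookkeeping'' --- bounding the degree, discriminant, regulator and class number of the splitting field in terms of $d$ and $Q_\cS$, constructing a fundamental system of $\cT$-units of controlled height, choosing the square-class representatives $\beta_i$ explicitly, and propagating all of this through the linear-forms estimates to land on those particular exponents --- is the entire proof. Two smaller points you gloss over: the evenness of $v(x-\alpha_i)$ for $v\notin\cT$ requires separate treatment of the case $v(x)<0$ (where all $d$ factors acquire the same negative valuation), and the reduction only directly bounds $h(z_i)$ at one triple of roots, so one must still unwind to $h(x)$ and then to $h(y)$ via the equation itself. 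None of this is conceptually wrong, but in the context of this paper the correct ``proof'' is the reference to~\cite{BEG}, and your sketch should be presented as an indication of how that result is obtained rather than as a self-contained argument.
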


We remark that we use $s+1$ in Lemma~\ref{lemma:BEG} rather than $s$ (as in ~\cite[Theorem~2.2]{BEG}) since we have not 
included in $\cS$ the unique Archimedean valuation.

\section{Square products in some products with iterations}

\subsection{Resultants with iterations}
We first recall the following necessary condition for  a polynomial 
to be dynamically irreducible over a finite field of 
odd characteristic~\cite[Corollary~3.3]{GNOS}.

\begin{lemma}
\label{lemma:stab}
Let $q$ be an odd prime power, and let $f\in\F_q[X]$ be a dynamically irreducible polynomial 
 of degree $d\geq  2$ with leading coefficient $f_d$, non-constant derivative 
 $f'$, $\deg f'=k\le d-1$. 
  Then the following properties hold: 
 \begin{enumerate}
 \item if $d$ is even, then $\Disc{f}$ and $f_d^k \Res{f^{(n)},f'}$, $n\geq 2$, are non-squares in $\Fq$,  
 \item if $d$ is odd, then $\Disc{f}$ and $(-1)^{\frac{d-1}{2}}f_d^{(n-1)k+1} \Res{f^{(n)},f'}$, $n\geq 2$, are squares in $\Fq$. 
   \end{enumerate}
\end{lemma}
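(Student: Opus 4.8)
The plan is to reduce the whole statement to one classical fact about finite fields: for a separable polynomial $g\in\Fq[X]$ with $q$ odd, $\Disc{g}$ is a square in $\Fq$ precisely when the Frobenius substitution $x\mapsto x^q$ acts on the roots of $g$ as an even permutation. For an \emph{irreducible} $g$ this permutation is a single $(\deg g)$-cycle, which is even exactly when $\deg g$ is odd; hence $\Disc{g}$ is a square iff $\deg g$ is odd. Since $f$ is dynamically irreducible, every iterate $f^{(n)}$ is irreducible of degree $d^n$, so this criterion immediately fixes the square class of $\Disc{f^{(n)}}$ for every $n$: it is a square iff $d$ is odd. Taking $n=1$ already yields the asserted behaviour of $\Disc{f}$ in both the even and odd cases.

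Next I would convert discriminants into resultants via the identity $\Disc{g}=(-1)^{m(m-1)/2}\,\mathrm{lc}(g)^{-1}\Res{g,g'}$, in the form valid when $\deg g'$ drops below $m-1$ (which happens exactly when $p\mid m$), and then open up $\Res{f^{(n)},(f^{(n)})'}$ using the chain rule $(f^{(n)})'=\prod_{i=0}^{n-1}f'\circ f^{(i)}$ together with multiplicativity of the resultant in its second argument, so that $\Res{f^{(n)},(f^{(n)})'}=\prod_{i=0}^{n-1}\Res{f^{(n)},f'\circ f^{(i)}}$. The key reduction is that, as $\alpha$ runs over the roots of $f^{(n)}$, the value $f^{(i)}(\alpha)$ runs over the roots of $f^{(n-i)}$, each attained exactly $d^i$ times; expanding each resultant as a product over roots then gives $\Res{f^{(n)},f'\circ f^{(i)}}=\pm\, f_d^{\,c_i}\,\Res{f^{(n-i)},f'}^{\,d^i}$ with an explicit integer $c_i$. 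The term $i=0$ isolates $\Res{f^{(n)},f'}$ with exponent one, while every term $i\ge1$ carries the exponent $d^i$.

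The square-class bookkeeping then splits on the parity of $d$. When $d$ is even, each exponent $d^i$ with $i\ge1$ is even, so all those resultant factors are perfect squares and drop out, leaving $\Disc{f^{(n)}}$ congruent modulo squares to $\Res{f^{(n)},f'}$ times a power of $f_d$; collecting the contributions of $\mathrm{lc}(f^{(n)})=f_d^{(d^n-1)/(d-1)}$ and of the $f'\circ f^{(i)}$ shows this power is exactly $f_d^{k}$, and the hypothesis $n\ge2$ guarantees $4\mid d^n$, so the sign $(-1)^{d^n(d^n-1)/2}$ equals $+1$. When $d$ is odd, every $d^i$ is odd and no factor is automatically a square, so I would instead argue by induction on $n$, using the single-step recursion obtained by peeling off the $i=0$ term, which expresses $\Disc{f^{(n)}}$ through $\Res{f^{(n)},f'}$ and $\Disc{f^{(n-1)}}^{d}$. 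As $\Disc{f^{(n-1)}}$ is a square by the degree criterion, so is its $d$-th power, each step contributes one further factor $f_d^{k}$, and the signs from the discriminant--resultant identity collapse modulo squares to the single residual sign $(-1)^{(d-1)/2}$; after $n-1$ steps the accumulated adjustment is exactly $(-1)^{(d-1)/2}f_d^{(n-1)k+1}$.

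The main obstacle is purely the bookkeeping of the leading-coefficient power and the sign, not any conceptual difficulty, and two points need genuine care. First, when $p\mid d$ the derivative $(f^{(n)})'$ has degree strictly below $d^n-1$, so one must use the correct form of the discriminant--resultant identity; it is precisely the actual degree $k=\deg f'$ entering the root expansions that turns the naive factor $f_d$ into $f_d^{k}$ (and when $p\nmid d$ one has $k=d-1$ odd, so $f_d^{k}$ and $f_d$ agree modulo squares, consistently with the even case). Second, the powers of $f_d=\mathrm{lc}(f)$ arising from $\mathrm{lc}(f^{(n)})$ and from the leading coefficients of the $f'\circ f^{(i)}$ must be tracked modulo squares simultaneously with the signs $(-1)^{m(m-1)/2}$; checking that these combine to give exactly the exponents $k$ and $(n-1)k+1$ together with the sign $(-1)^{(d-1)/2}$ is the only delicate part of the argument.
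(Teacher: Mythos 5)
The paper does not actually prove this lemma: it is quoted directly from~\cite[Corollary~3.3]{GNOS}, so there is no internal proof to compare against. Your reconstruction follows what is essentially the standard route to that result (and, as far as one can tell, the route of the cited reference): the Stickelberger parity criterion, by which for odd $q$ and irreducible $g$ the discriminant $\Disc{g}$ is a square in $\Fq$ exactly when $\deg g$ is odd, because Frobenius acts on the roots as a single $(\deg g)$-cycle; the chain rule $(f^{(n)})'=\prod_{i=0}^{n-1} f'\circ f^{(i)}$ together with multiplicativity of the resultant in the second argument; and the observation that $f^{(i)}$ maps the roots of $f^{(n)}$ onto those of $f^{(n-i)}$ with multiplicity $d^i$, so that for $d$ even all factors with $i\ge 1$ are perfect squares while for $d$ odd one argues by induction on $n$. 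This skeleton is sound, and you correctly identify the two genuinely delicate points: the role of $n\ge 2$ (which forces $4\mid d^n$ and kills the sign $(-1)^{d^n(d^n-1)/2}$, explaining why $\Disc{f}$ and the resultants for $n\ge 2$ must be treated separately) and the need to use the true degree $k=\deg f'$ of the derivative when $p\mid d$. The only reservation is that the precise exponents $k$ and $(n-1)k+1$ and the residual sign $(-1)^{(d-1)/2}$ --- which constitute the actual content of the statement beyond ``square class determined up to an explicit factor'' --- are asserted to work out rather than computed; to claim the lemma as stated, the leading-coefficient and sign bookkeeping you flag as delicate would still have to be carried through explicitly (a quick sanity check on $f=X^2+c$, where the lemma reduces to the classical Ayad--McQuillan criterion on the critical orbit, confirms the normalisation).
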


Given a dynamically irreducible polynomial $f \in \Z[X]$ of degree $d\ge 2$ with the leading 
coefficient $f_d \ne 0$, we  write 
\begin{equation}
\label{eq:def un}
f_d \cdot \Res{f^{(n)},f'}=2^{\nu_n}u_n, \quad n\geq 2,
\end{equation}
with some odd integers $u_n$. 
We observe that  the dynamical  irreducibility of  $f$ implies that $\Res{f^{(n)},f'} \ne0$,
and hence the above parameters $\nu_n$ and $u_n$, are always well-defined. 

We also recall the following estimate on the size of iterates, see~\cite[Lemma~2.8]{MOS}. 

\begin{lemma}
\label{lem:res height}
Let $f\in\Q[X]$ be a polynomial of degree $d\ge 1$. Then, for any $n\ge 1$, we have
$$
h\(\Res{f^{(n)},f'}\)\ll d^n.
$$
\end{lemma}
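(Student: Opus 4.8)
The plan is to reduce the statement to two ingredients: a bound on the (coefficient) height of the iterate $f^{(n)}$, and the classical Hadamard-type bound for the resultant read off from the Sylvester determinant. Throughout, write $\deg f^{(n)}=d^n$ and $\deg f'=d-1$, and for a polynomial $g=\sum_i g_iX^i\in\Q[X]$ let $H(g)=\max_i|g_i|$ denote the maximum modulus of its coefficients. If $d=1$ the derivative $f'$ is a nonzero constant and $\Res{f^{(n)},f'}$ is a power of it, so the bound is trivial; thus I may assume $d\ge 2$.

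First I would establish that $\log H\(f^{(n)}\)\ll d^n$, by induction on $n$ using $f^{(n)}=f\(f^{(n-1)}\)$. Writing $f^{(n)}=\sum_{k=0}^d f_k\(f^{(n-1)}\)^k$ and using that a product of two polynomials with at most $N+1$ terms has coefficient height at most $(N+1)$ times the product of the two coefficient heights, one obtains a recursion of the shape
$$
\log H\(f^{(n)}\)\le d\,\log H\(f^{(n-1)}\)+O(n),
$$
where the $O(n)$ term accounts for the $O(1)$ contribution of the coefficients of $f$ together with logarithms of the numbers of terms (each $\ll n$). Since $\sum_{j\ge 0}j/d^j$ converges for $d\ge 2$, this solves to $\log H\(f^{(n)}\)\ll d^n$. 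In the same way, if $D$ is a common denominator of the coefficients of $f$, a parallel induction shows that the coefficients of $f^{(n)}$ admit a common denominator $D_n$ with $\log D_n\ll d^n$.

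Next I would bound the resultant itself. Let $e$ be a common denominator of the (bounded) coefficients of $f'$, so that $D_nf^{(n)}$ and $ef'$ lie in $\Z[X]$, and apply the scaling rule
$$
\Res{f^{(n)},f'}=D_n^{-(d-1)}e^{-d^n}\,\Res{D_nf^{(n)},ef'},
$$
where $\Res{D_nf^{(n)},ef'}\in\Z$. The denominator of $\Res{f^{(n)},f'}$ divides $D_n^{d-1}e^{d^n}$, whose logarithm is $(d-1)\log D_n+d^n\log e\ll d^n$. For the integer $\Res{D_nf^{(n)},ef'}$ I would apply Hadamard's inequality to its Sylvester matrix, which has $d-1$ rows built from the coefficients of $D_nf^{(n)}$ and $d^n$ rows built from those of $ef'$, giving
$$
\left|\Res{D_nf^{(n)},ef'}\right|\le\left\|D_nf^{(n)}\right\|_2^{\,d-1}\,\left\|ef'\right\|_2^{\,d^n}.
$$
Here $\log\|D_nf^{(n)}\|_2\le\log D_n+\tfrac12\log(d^n+1)+\log H\(f^{(n)}\)\ll d^n$, while $\log\|ef'\|_2=O(1)$ is bounded independently of $n$. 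Hence the logarithm of the numerator is at most $(d-1)\cdot O(d^n)+d^n\cdot O(1)\ll d^n$, and combining the numerator and denominator estimates yields $h\(\Res{f^{(n)},f'}\)\ll d^n$.

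The step I expect to be the genuine obstacle is forcing the resultant bound to come out at $O(d^n)$ rather than $O(nd^n)$ or $O(d^{2n})$. The crude permutation estimate $|\det|\le N!\,H^N$ with $N=d^n+d-1$ fails, since $\log N!\asymp nd^n\log d$ already exceeds $d^n$. The resolution is twofold: use Hadamard's inequality in place of the permutation expansion, and exploit the asymmetry of the Sylvester matrix — only the $d-1=O(1)$ rows coming from $f^{(n)}$ carry the large coefficients (of height $\ll d^n$), while the $d^n$ remaining rows carry only the bounded coefficients of $f'$. It is equally important to clear the denominators of $f^{(n)}$ and of $f'$ separately, rather than by one common factor, so that the $d^n$ "small'' rows retain bounded norm.
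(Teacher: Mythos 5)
Your argument is correct, but there is nothing in the paper to compare it against: the lemma is quoted without proof from~\cite[Lemma~2.8]{MOS}. On its own merits your proof works. The induction giving $\log H\(f^{(n)}\)\ll d^n$ and $\log D_n\ll d^n$ is standard and the recursion $a_n\le d\,a_{n-1}+O(n)$ does solve to $O(d^n)$ for $d\ge 2$; and you correctly isolate the only delicate point, namely that the permutation expansion of the Sylvester determinant costs a factor $N!$ with $N\asymp d^n$ and must be replaced by Hadamard's inequality, exploiting that only $d-1=O(1)$ rows carry the large coefficients of $D_nf^{(n)}$ while the $d^n$ rows built from $ef'$ have norm $O(1)$. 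Clearing denominators separately, as you insist on, is indeed essential for the latter point. For comparison, the shorter and more standard route (and very likely the one taken in~\cite{MOS}) avoids coefficient estimates entirely: up to sign, $\Res{f^{(n)},f'}=(df_d)^{d^n}\prod_{f'(\beta)=0}f^{(n)}(\beta)$, a product over the $d-1$ critical points $\beta\in\overline{\Q}$, and the functorial height bound $h(f(\alpha))=d\,h(\alpha)+O(1)$ on $\overline{\Q}$ gives $h\(f^{(n)}(\beta)\)\ll d^n$ for each of the $O(1)$ factors, whence the claim by subadditivity of heights under products. That argument is a few lines and transfers verbatim to number fields, but it presupposes the Weil height machinery on $\overline{\Q}$; yours is elementary and entirely explicit over $\Z$. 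Either way one gets the bound $\ll d^n$ with the implied constant depending on $f$, which is all that the proofs of Theorems~\ref{thm:P1 P2}--\ref{thm:P3-GRH} require.
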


\subsection{Set-up of sieving parameters}
\label{sec:par}
Let us fix some integer parameters $N$ and $t$ to be chosen later.

As in~\cite{MOS} we observe that 
by the Dirichlet principle there is a set $\cN\subseteq [N,N+t]$ of size
\begin{equation}
\label{eq:large N}
\#\cN\geq \frac{1}{4}t
\end{equation}
 such that for all  $r,s\in\cN$ we have
$$
u_r\equiv u_s \mod 4  \mand  \nu_r\equiv \nu_s \mod 2.
$$
Therefore, since $u_n$, $n\ge 2$, are odd, we have
\begin{equation}
\label{eq:u nu}
u_r+u_s \equiv 2 \mod 4 \mand \nu_r+\nu_s\equiv 0 \mod 2.
\end{equation}

Using that for an odd $m$ we have $2 \mid m-1$ and $8 \mid m^2-1$, we conclude that 
\begin{equation}\label{eq:parity}
(-1)^{\frac{u_{r}+u_{s}-2}{2}\frac{m-1}{2}+(\nu_{r}+\nu_{s} ) \frac{m^2-1}{8}} =1.
 \end{equation}

\subsection{Classes $\cP_1$ and $\cP_2$}

One of our main ingredients is the following result, essentially 
established in~\cite[Section~3.2]{MOS} for the class of polynomials $\cP_1$. 

\begin{lemma}\label{lem:squaresP1P2}
Let $f \in \cP_1 \cup \cP_2$ be dynamically irreducible.  Assume that   
$\gamma$ defined for each of the classes $\cP_1$ and $\cP_2$ by~\eqref{eq: gamma-P1}
and~\eqref{eq: gamma-P2}, respectively,  is not a pre-periodic point of $f$. 
Then there is a constant $c_1> 0$, depending only on $f$, such that for 
$t \le c_1 \log N$ the products $|u_m u_n|$ with $m,n\in\cN$, $m<n$, never form a  square. 
\end{lemma}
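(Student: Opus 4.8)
The plan is to show that if $|u_mu_n|$ were a perfect square for some $m<n$ in $\cN$, then we would obtain an $\cS$-unit equation or a hyperelliptic equation forcing the height of an iterate $f^{(m)}(\gamma)$ (or a related quantity) to be bounded, which, combined with the assumption that $\gamma$ is not pre-periodic, gives a contradiction once $t$ (and hence the spread of the indices in $\cN$) is small relative to $\log N$.

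First I would recall the factorisation of the resultant $\Res{f^{(n)},f'}$ in terms of the critical orbit. Writing $f' = f_d' \prod_i (X-\gamma_i)^{e_i}$ over $\overline{\Q}$, one has (up to leading-coefficient powers) $\Res{f^{(n)},f'} = \pm f_d^{\deg f'} \prod_i f^{(n)}(\gamma_i)^{e_i}$. For the classes $\cP_1$ and $\cP_2$ the point $\gamma$ is the distinguished critical point of odd multiplicity (respectively the unique non-zero critical point), so after pulling out squares the "square class" of $\Res{f^{(n)},f'}$ is governed by $f^{(n)}(\gamma)$ together with the contributions of the even-multiplicity critical points, which are automatically squares. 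Thus $u_n$ is, up to a bounded $\cS$-unit factor and up to squares, the odd part of $f^{(n)}(\gamma)$. This reduction is exactly the content of~\cite[Section~3.2]{MOS} for $\cP_1$, and the $\cP_2$ case follows analogously using the shape of $\gamma$ in~\eqref{eq: gamma-P2} and the pre-periodicity of $0$.

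Next, assuming $|u_mu_n|$ is a square, I would translate this into the statement that $f^{(m)}(\gamma)$ and $f^{(n)}(\gamma)$ lie in the same square class modulo $\cS$-units, i.e. $f^{(m)}(\gamma) f^{(n)}(\gamma) = s\,\vartheta^2$ for some $\cS$-unit $s$ and some $\vartheta$. Writing $\beta = f^{(m)}(\gamma)$ and using $f^{(n)}(\gamma) = f^{(n-m)}(\beta)$, one expresses the second factor as a polynomial in $\beta$ and arranges the identity into the form $\alpha(\alpha\beta + s_1) = s_2\vartheta^2$ handled by Lemma~\ref{lem:GCD}, or directly as a hyperelliptic equation $F(x) = by^2$ of degree $\ge 3$ (in the iterate variable) to which Lemma~\ref{lemma:BEG} applies. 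The set $\cS$ here is the set of primes of bad reduction together with the primes dividing the relevant leading coefficients and resultants; its size $s$ is bounded in terms of $f$ and the indices $m,n$, and by Lemma~\ref{lem:res height} the heights of the coefficients grow at most like $d^n \le d^{N+t}$.

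The main obstacle, and the step requiring care, is the quantitative bookkeeping: Lemma~\ref{lemma:BEG} bounds $h(x)$ by a quantity that is doubly exponential in the parameters $d$, $s$, and $Q_\cS$, while the true height of $f^{(m)}(\gamma)$ grows like $d^m \asymp d^N$ because $\gamma$ is not pre-periodic. I would show that the bad-prime set $\cS$ and the constant $b$ can be controlled so that the height bound from Lemma~\ref{lemma:BEG} is of the form $\exp(O(N))$ with an absolute implied constant, whereas the genuine height lower bound coming from non-pre-periodicity is also $\exp(\gg N)$ but with a \emph{larger} constant once the indices are confined to an interval of length $t \le c_1 \log N$. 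Comparing these two exponential rates and choosing $c_1$ small enough yields the contradiction, which is precisely why the threshold $t \le c_1 \log N$ appears. Thus the heart of the argument is balancing the sieve-length parameter $t$ against the doubly-exponential constants in the Diophantine input so that the finitely many "coincidental" square products are excluded.
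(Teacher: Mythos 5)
Your overall route --- a square product of the $u_n$ forces $\alpha f^{(n-m)}(\alpha)=s\vartheta^2$ with $\alpha=f^{(m)}(\gamma)$, and one then plays the height upper bound from Lemma~\ref{lemma:BEG} against the exponential height growth along the orbit of the non-pre-periodic point $\gamma$ --- is exactly the paper's (which itself defers most details to~\cite[Section~3.2]{MOS}); Lemma~\ref{lem:GCD} is not needed here, as it belongs to the proof of Lemma~\ref{lem:squaresP3}. However, the step you yourself single out as ``requiring care'' is where your account breaks down, in two ways. First, your set $\cS$ of ``primes dividing the relevant leading coefficients and resultants,'' with $\#\cS$ ``bounded in terms of $f$ and the indices $m,n$,'' is fatally too large: the primes dividing $\Res{f^{(n)},f'}$ grow without bound with $n$, and since $s=\#\cS$ and $Q_\cS$ enter the exponents in Lemma~\ref{lemma:BEG}, any dependence of $\cS$ on $m,n$ destroys the bound. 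The whole point of the hypotheses defining $\cP_2$ is that one may take $\cS$ to be the \emph{fixed} set of primes dividing $d$, $f_d$ and the finitely many elements of $\cO_f(0)$ (finite precisely because $0$ is pre-periodic), so that the extraneous factors $(df_d)^{d^m+d^n}\(f^{(m)}(0)f^{(n)}(0)\)^{d-2}$ in~\eqref{eq:res mn} are $\cS$-units and $s$, $Q_\cS$ depend only on $f$; one also replaces the $\cS$-unit $b$ by a bounded-height representative of its class modulo squares so that $h(b)=O(1)$.

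Second, your explanation of where the threshold $t\le c_1\log N$ comes from is not correct. The upper bound from Lemma~\ref{lemma:BEG} is not $\exp(O(N))$: applied to $Xf^{(n-m)}(X)$, whose degree is $d^{n-m}+1\le d^t+1$, it gives $h(\alpha)\le\exp\(O(d^{4t})\)$ --- doubly exponential in $t$ and independent of $N$. The lower bound from non-pre-periodicity is $h(\alpha)=h\(f^{(m)}(\gamma)\)\ge d^{N-n_0}$, whose constant has nothing to do with $t$. The contradiction therefore requires $d^{O(t)}\ll N$, which is precisely the condition $t\le c_1\log N$; a comparison of ``two exponential rates in $N$ with different constants,'' as you propose, would impose no condition on $t$ at all and could not be closed without explicit knowledge of those constants. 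As written, your quantitative bookkeeping would not produce the stated threshold.
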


\begin{proof} The proof follows  similar argument as in~\cite[Section~3.2]{MOS}, which already proves the statement for the polynomials $f\in \cP_1$. For convenience, we repeat some details, to prove the statement for the class of polynomials $\cP_2$ as well.

Let $f\in \cP_2$ and let $m<n$ be a pair of integers in $\cN$ such that $u_{m}u_{n}$ is a  square. 
 Then, as in~\cite[Section~3.2]{MOS}, using properties of resultants,  
we obtain, recalling~\eqref{eq:u nu}, that the resultant $\Res{f^{(m)}f^{(n)},f'} $
is also a square.

Now, since $f\in \cP_2$, and using again properties of resultants as in~\cite[Section~3.2]{MOS}, we obtain that 
\begin{equation}
\label{eq:res mn}
\begin{split}
\Res{f^{(m)}f^{(n)},f'}&=(-1)^{(d-1)(d^{m}+d^{n})}(df_d )^{d^{m}+d^{n}}\\
&\qquad \quad  \times \(f^{(m)}(0) f^{(n)}(0)\)^{d-2} f^{(m)}(\gamma)f^{(n)}(\gamma).
\end{split}
\end{equation}

Now, since $f$ is dynamically irreducible, $0$ is not periodic for $f$ and thus  
$$
f^{(m)}(0), f^{(n)}(0)\ne 0.
$$
Moreover, since $0$ is pre-periodic under $f$,  the orbit of $0$,
$$
\cO_f(0)=\{f^{(k)}(0): k\ge 1\},
$$
is a finite set.

We let $\cS$ be the (finite) set of prime divisors of $d$, $f_d$ and the finitely many elements in $\cO_f(0)$. 

From~\eqref{eq:res mn}, we conclude that 
$$
\alpha f^{(n-m)}(\alpha) =u\beta^2,
$$
where $\alpha=f^{(m)}(\gamma)$ and $\beta$ are $\cS$-integers in $\Q$, $u$ is an $\cS$-unit in $\Q$ and $\deg f^{(n-m)}\le d^t$.

Now the proof is concluded exactly as in~\cite[Section~3.2]{MOS}. 

Indeed, 
since $f$ is dynamically irreducible of degree at least two,  $Xf^{(n-m)}(X)$ is a polynomial of degree at least $3$ without multiple roots in $\ov\Q$. We can apply now Lemma~\ref{lemma:BEG} with the polynomial $Xf^{(n-m)}(X)$ to conclude that 
\begin{equation}
\label{eq:upper}
h(\alpha)\le \exp(O(d^{4t})).
\end{equation}
On the other hand, since $\gamma$ is not a pre-periodic point of the polynomial  $f$, as in~\cite[Section~3.2]{MOS}, there exists a positive integer $n_0$ depending only on $f$ such that $h\(f^{(n_0)}(\gamma)\)\ge c_0+1$, where $c_0$ is a constant depending only on $f$, and thus we have
$$
h(\alpha)=h\(f^{(n-n_0)}\(f^{(n_0)}(\gamma)\)\)\ge d^{n-n_0}\(h\(f^{(n_0)}(\gamma)\)-c_0\) \ge d^{N-n_0}.
$$
We choose now a suitable constant $c_1$, depending only on $f$, and also a large enough $N$ 
  to obtain a contradiction 
with~\eqref{eq:upper}. We thus deduce that there is no nontrivial pair $m,n\in\cN$, $m<n$, such that  
$u_{m}u_{n}$
is a square in $\Q$, which concludes the proof.
\end{proof}

\begin{remark} 
We note that for the proof of Lemma~\ref{lem:squaresP1P2}, one does not need the condition that $f$ is dynamically irreducible, but only that $0$ is not periodic under $f$ and that the roots of $Xf^{(k)}(X)$, $k\ge 1$, are all distinct. In fact the latter condition can also be relaxed via~\cite[Theorem~2.1]{BBGMOS}.
\end{remark}

\subsection{Class $\cP_3$}
We now show that for polynomials  $f \in \cP_3$
the products  $|u_m u_n|$  form a   square
only in the trivial case $m=n$, provided $m$ and $n$ are large enough. 

\begin{lemma}\label{lem:squaresP3}
Let $f \in \cP_3$ be dynamically irreducible.  Assume that   
$\gamma$ defined for  f in the class $\cP_3$ by~\eqref{eq: gamma-P1}
 is not a pre-periodic point of $f$. 
Then there is a constant $c_2$ , depending only on $f$, such that for 
$m > c_2$ the products $|u_m u_n|$ with $m,n\in\cN$, $m<n$, 
never form a square. 
\end{lemma}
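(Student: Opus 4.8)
The plan is to follow the skeleton of the proof of Lemma~\ref{lem:squaresP1P2}, but to exploit the extra structure of $\cP_3$ — namely that $0$ is pre-periodic while simultaneously $f'=g^2(aX+b)$ — in order to reduce, via Lemma~\ref{lem:GCD}, to an application of the effective bound of Lemma~\ref{lemma:BEG} for the \emph{fixed} polynomial $f$ rather than for the growing iterate $f^{(n-m)}$. It is precisely this replacement that lets me impose the condition $m>c_2$ in place of $t\le c_1\log N$.

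First I would run the same square reduction as before. Assume towards a contradiction that $u_mu_n$ is a square for some $m<n$ in $\cN$, and let $\cS$ be the set of primes dividing $2\,d\,f_d\,a$ together with those dividing the finitely many elements of $\cO_f(0)$; this set is finite precisely because $0$ is pre-periodic. Membership in $\cP_1$ forces $\deg f'=d-1$ to be odd, so $d$ is even, and in the factorisation $f'=g^2(aX+b)$ every root of $g$ contributes an even power of $f^{(k)}(\cdot)$ to $\Res{f^{(k)},f'}$ while $\mathrm{lc}(f')^{d^k}$ is a square; hence $\Res{f^{(k)},f'}\equiv f^{(k)}(\gamma)$ modulo squares and $\cS$-units. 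Exactly as in the first part of the proof of Lemma~\ref{lem:squaresP1P2} (which applies verbatim since $\cP_3\subset\cP_1$), combining this with~\eqref{eq:def un} and the parity relations~\eqref{eq:u nu}, the assumption that $u_mu_n$ is a square forces $f^{(m)}(\gamma)f^{(n)}(\gamma)$ to be an $\cS$-unit times the square of an $\cS$-integer.

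The key new step is then to put $\alpha=f^{(m)}(\gamma)$, an $\cS$-integer since $\gamma=-b/a$ and $f\in\Z[X]$. Writing $f^{(n-m)}(X)=X\,h(X)+f^{(n-m)}(0)$ with $h\in\Z[X]$ and using $f^{(n)}(\gamma)=f^{(n-m)}(\alpha)$, I obtain
$$
f^{(m)}(\gamma)f^{(n)}(\gamma)=\alpha\(\alpha\,h(\alpha)+f^{(n-m)}(0)\).
$$
Here $\beta:=h(\alpha)$ is an $\cS$-integer and, crucially, $s_1:=f^{(n-m)}(0)$ lies in $\cO_f(0)$ and is therefore a nonzero (nonzero because $f$ dynamically irreducible makes $0$ non-periodic) $\cS$-unit — this is exactly the feature that $\cP_1$ alone does not supply. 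Since the left-hand side is an $\cS$-unit times a square, Lemma~\ref{lem:GCD} applies and yields $f^{(m)}(\gamma)=s_0c^2$ with $s_0$ an $\cS$-unit and $c$ an $\cS$-integer; absorbing square $\cS$-units I may take $s_0$ to be one of the finitely many squarefree $\cS$-unit representatives, so that $h(s_0)\ll\log Q_\cS$ is bounded in terms of $f$ only.

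Finally I would peel off the last $k_0$ iterations, where $k_0\in\{1,2\}$ is minimal with $d^{k_0}\ge 3$ (so $k_0=1$ unless $d=2$). Setting $w=f^{(m-k_0)}(\gamma)$ gives $f^{(k_0)}(w)=f^{(m)}(\gamma)=s_0c^2$, and since $f$ is dynamically irreducible, $f^{(k_0)}$ is irreducible over $\Q$, hence separable, of degree $d^{k_0}\ge 3$, and lies in $\Z_\cS[X]$. Applying Lemma~\ref{lemma:BEG} to $f^{(k_0)}$ with $b=s_0$ — whose degree, together with $s=\#\cS$, $Q_\cS$ and $h(s_0)$, depends only on $f$ — bounds $h(w)=h\(f^{(m-k_0)}(\gamma)\)\le C$ for a constant $C=C(f)$; this is the whole gain, since the effective result is now applied to a fixed polynomial of bounded degree and the bound is independent of $m,n$, rather than the bound $\exp\(O(d^{4t})\)$ coming from $f^{(n-m)}$. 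On the other hand, just as in the proof of Lemma~\ref{lem:squaresP1P2}, the hypothesis that $\gamma$ is not pre-periodic yields $h\(f^{(m-k_0)}(\gamma)\)\ge d^{\,m-k_0-n_0}$ for some $n_0=n_0(f)$, which exceeds $C$ once $m$ is large; choosing $c_2$ accordingly gives the contradiction for all $m>c_2$. The hard part of this plan is the manoeuvre in the third paragraph: one must cast the square condition into the exact shape $\alpha(\alpha\beta+s_1)=s_2\vartheta^2$ demanded by Lemma~\ref{lem:GCD}, verifying that $\alpha,\beta,\vartheta$ are $\cS$-integers while $s_1=f^{(n-m)}(0)$ and $s_2$ are $\cS$-units, and it is the pre-periodicity of $0$ — indispensable for $s_1$ being an $\cS$-unit — that is the sole reason the argument improves on the $\cP_1$ case.
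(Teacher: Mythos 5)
Your proposal is correct and follows essentially the same route as the paper: your identity $f^{(m)}(\gamma)f^{(n)}(\gamma)=\alpha\bigl(\alpha h(\alpha)+f^{(n-m)}(0)\bigr)$ is exactly the paper's decomposition~\eqref{eq:fn fm} evaluated at $\gamma$, after which both arguments feed the resulting relation into Lemma~\ref{lem:GCD} and then apply Lemma~\ref{lemma:BEG} to a \emph{fixed} iterate ($f^{(2)}$ in the paper, $f^{(k_0)}$ with $d^{k_0}\ge 3$ in yours) to bound $h\bigl(f^{(m-k_0)}(\gamma)\bigr)$ independently of $m,n$, contradicting the height growth from non-pre-periodicity of $\gamma$ once $m>c_2$. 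Your explicit normalisation of $s_0$ to a squarefree $\cS$-unit representative of bounded height is a detail the paper leaves implicit but is a welcome precision.
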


\begin{proof} 
We see that 
\begin{equation}
\label{eq:fn fm}
f^{(n)}(X) = f^{(m)}(X) G_{m,n}(X) + f^{(n)}(0) .
\end{equation}

Clearly $ f^{(n)}(0) \ne 0$ for $n \ge 1$ as otherwise $f^{(n)}(X) $ is reducible.
By pre-periodicity of $0$, the values $f^{(n)}(0)$, $n =1, 2, \ldots$, belong to 
a finite set $\cT$. Defining $\cS$ as in the proof of Lemma~\ref{lem:squaresP1P2}, 
including also the  the primes dividing the finitely many elements in $\cT$, we see, 
using properties of resultants as in~\cite[Section~3.2]{MOS}, and~\eqref{eq:def un}
and~\eqref{eq:fn fm},
that if $|u_m u_n|$ is a square then we have 
$$
f^{(m)}\(\gamma\) \( f^{(m)}\(\gamma\) \beta+ s_1\) = s_2 \vartheta^2
$$
for some $S$-integers $\beta, \vartheta\in \Q^*$ and 
 $\cS$-units $s_1, s_2 \in \Q^*$. Hence, by Lemma~\ref{lem:GCD}
\begin{equation}
\label{eq:fm a2}
f^{(m)}\(\gamma\) =  s_0  a^2, 
\end{equation}
 for some $\cS$-unit  $s_0 \in \Q^*$ and an 
integer $a \in \Z$. We also note that since $\gamma$ is not pre-periodic, it must be nonzero.

Assuming that $m\ge 2$, we can write~\eqref{eq:fm a2} as 
$$
f^{(2)}\(f^{(m-2)}\(\gamma\) \)=  s_0  a^2.
$$
Since, by our assumption, $f^{(2)}(X)$ is irreducible  by  Lemma~\ref{lemma:BEG} 
we see that the height of $f^{(m-2)}\(\gamma\)$ is bounded only in terms of $f$, and thus~\eqref{eq:fm a2} 
is possible for finitely many values of $f^{(m-2)}\(\gamma\)$. Since $\gamma$ is
not pre-periodic, this implies there is a constant $c_2$, depending only on $f$, such that the possible values of $m$ satisfy $m\le c_2$. 
This completes the proof.
\end{proof}

\section{Proofs of main results}

\subsection{Reduction to bounds of character sums}
 
We recall the definition of the elements $u_n$ and of the set $\cN$ in Section~\ref{sec:par}. We split $\cN$ into two subsets  of $n\in \cN$ for which   $u_n$ 
has a prescribed sign. Let $\cM$ be the largest subset and thus 
(also recalling~\eqref{eq:large N}) we see that
we have  
$$
u_m u_n > 0, \quad m,n \in   \cM, \mand 
\#\cM \ge  \frac{1}{2} \# \cN \ge \frac{1}{8}t. 
$$

Let us consider the sum
\begin{equation}
\label{eq:Sum S}
S=\sum_{Q \le p \le 2Q} \(\sum_{n\in\cM} \left(\frac{f_d  \cdot \Res{f^{(n)},f'}}{p} \right)\)^2.
\end{equation} 
If $\bar{f_p}$ is dynamically irreducible modulo $p$, then by Lemma~\ref{lemma:stab}, we have
\begin{align*}
\left| \sum_{m\in\cM} \left(\frac{f_d \cdot \Res{f^{(m)},f'}}{p} \right)\right|& =
\left|  \sum_{n\in\cM} \left(\frac{f_d ^{d-1}\cdot \Res{f^{(n)},f'}}{p} \right)\right|\\
& = \#\cM\leq \frac{t}{8},
\end{align*}
and  thus
\begin{equation}
\label{eq:Pf S}
P_f(Q)\leq 64 \frac{S}{t^2},
\end{equation}
where $P_f(Q)$ is defined by~\eqref{eq:PfQ} (which is a full analogue 
of~\cite[Equation~(3.4)]{MOS}).

Finally,  using Lemma~\ref{lemma:reciprocity}, we obtain 
\begin{equation}
\label{eq: Spm flip}
S = \sum_{p \in [Q,2Q]} \(\sum_{n\in\cM } (-1)^{\frac{u_n-1}{2}\frac{p-1}{2}+\nu_n \frac{p^2-1}{8}}\left(\frac{p}{u_n} \right)\)^2.
\end{equation} 

\subsection{Selberg weights}
We appeal to~\cite[Section~3.2]{MonVau}. In particular, we choose some 
parameter $z \ge 2$ and set 
\begin{equation}
\label{eq: PrimeProd}
\fP = \prod_{p \le z} p.
\end{equation}
 It is shown in~\cite[Section~3.2]{MonVau}, that there are some   real numbers $\lambda^+_n$, known as {\it Selberg weights\/},  such that  
\begin{itemize}
\item by construction of the weights $\Lambda_r$ 
 in~\cite[Section~3.2]{MonVau} (which are denoted as $\Lambda_d$ in~\cite{MonVau} 
 but here $d$ is already reserved for the degree of $f$) : for
$n \ge z^2$,
\begin{equation}
\label{eq:lambda_n = 0}
 \lambda^+_n  = 0 ; 
\end{equation}
\item by~\cite[Equations~(3.17) and~(3.22)]{MonVau} combined 
with~\cite[Exercise~2.1.17]{MonVau}:
\begin{equation}\label{eq: sum|lambda_n|}
 \sum_{n=1}^\infty |\lambda^+_n| \ll \frac{z^2}{(\log z)^2};
\end{equation}
 \item by the definition of $\lambda^+_n$ and~\cite[Equations~(3.12) and~(3.13)]{MonVau}: 
\begin{equation}\label{eq: sum lambda_e e|q}
 \sum_{e\mid q} \lambda^+_e \ge \begin{cases} 1,  & \text{if\ } \gcd \(q,\fP\) = 1, \\
 0, & \text{otherwise,}
 \end{cases} 
\end{equation}
where $\fP$ is defined by~\eqref{eq: PrimeProd};
 \item by the argument given in the proof of~\cite[Theorem~3.2]{MonVau}, with the above choice of $\fP$ and 
 real weights $\Lambda_r$ 
  as in~\cite[Section~3.2]{MonVau}:  for any $Z$ with  $z^3 \le Z \le z^{O(1)}$, 
 \begin{equation}
\label{eq: sum lambda_n dyadic}
\begin{split}
 \sum_{q \in [Z,2Z]}  \sum_{e\mid q} \lambda^+_e& =
Z  \sum_{e \le z^2}\frac{\lambda^+_e}{e} + O(z^2) \\
 & = Z  \sum_{e \le z^2} \sum_{\substack{r,s\le z\\ \lcm[r,s] = e}}\Lambda_r \Lambda_s  + O(z^2) 
   \ll \frac{Z}{\log Z},
  \end{split} 
\end{equation}
for the optimal choice of the weights   $\Lambda_r$, given 
by~\cite[Equations~(3.12) and~(3.15)]{MonVau}. 
\end{itemize}

We also note that these properties are also summarised by Harper~\cite[Section~5]{Harp}. 
In fact our approach follows the argument in~\cite[Section~5]{Harp}, however modified to
accommodate our sum $S$.

\subsection{Estimating the  sum $S$}
Next, given the above weights  $\lambda^+_n$, with 
 \begin{equation}
\label{eq: z}
z = Q^{1/4},
\end{equation} 
and with $\fP$ as in~\eqref{eq: PrimeProd},  
using~\eqref{eq: Spm flip} and then~\eqref{eq: sum lambda_e e|q},  
we write 
\begin{align*}
S & \le \sum_{\substack{q \in [Q,2Q]\\\gcd(q,\fP) = 1}}  \(\sum_{n\in\cM } (-1)^{\frac{u_n-1}{2}\frac{p-1}{2}+\nu_n \frac{p^2-1}{8}}\left(\frac{p}{u_n} \right)\)^2\\
& =   \sum_{q\in[Q,2Q]}  \sum_{e\mid q} \lambda^+_e \(\sum_{n\in\cM } (-1)^{\frac{u_n-1}{2}\frac{q-1}{2}+\nu_n \frac{q^2-1}{8}}\left(\frac{q}{u_n} \right)\)^2\\
&\le 
 \sum_{q\in[Q,2Q]}  \sum_{e\mid q} \lambda^+_e \sum_{n_1,n_2\in\cM} 
(-1)^{\frac{u_{n_1}+u_{n_2}-2}{2}\frac{q-1}{2}+(\nu_{n_1}+\nu_{n_2} ) \frac{m^2-1}{8}}\left(\frac{q}{u_{n_1}u_{n_2}} \right). 
\end{align*}

Recalling~\eqref{eq:u nu}, we conclude that 
$$
S  \le \sum_{q\in[Q,2Q]}  \sum_{e\mid q} \lambda^+_e 
\sum_{n_1,n_2\in\cM}  \left(\frac{q}{u_{n_1}u_{n_2}}\right).
$$
By the positivity condition~\eqref{eq: sum lambda_e e|q} and then by the inequality~\eqref{eq: sum lambda_n dyadic}, the contribution $D$ from the diagonal 
terms is 
$$
D \le \# \cM \sum_{q\in[Q,2Q]}  \sum_{e\mid q} \lambda^+_e 
\le   \# \cM \frac{Q}{\log Q}. 
$$
Hence, 
\begin{equation}
\label{eq:S and T}
S \ll t  \frac{Q}{\log Q} + T,
\end{equation}  
where, changing the order of summation twice (and using~\eqref{eq:lambda_n = 0})
we write   
 \begin{equation}
\label{eq:  sum T}
\begin{split}
T & =
 \sum_{q\in[Q,2Q]}  \sum_{e\mid q} \lambda^+_e 
  \sum_{\substack{n_1,n_2\in\cM\\n_1\ne n_2}}  
\left(\frac{q}{u_{n_1}u_{n_2}}\right)\\
& = \sum_{\substack{n_1,n_2\in\cM\\n_1\ne n_2}}   \sum_{e \le z^2} \lambda^+_e   
  \sum_{\substack{q\in[Q,2Q]\\ e \mid q}}  
\left(\frac{q}{u_{n_1}u_{n_2}}\right).
  \end{split} 
\end{equation}

\subsection{Proof of Theorem~\ref{thm:P1 P2}}

Applying now Lemma~\ref{lem:squaresP1P2} with 
 some  sufficiently small constant $c_1>0$, 
depending only on $f$ and 
\begin{equation}
\label{eq:N t 12}
N = \rf{c_3 \log \log Q} \mand t= \fl{c_4\log\log\log Q},
\end{equation} 
we conclude that the products  $u_{n_1}u_{n_2}$,  $n_1,n_2\in\cM$, $n_1\ne n_2$, are 
never a   square and, by Lemma~\ref{lem:res height}, one has 
$$
3  \le |u_{n_1}u_{n_2}| \le Q^{1/2}.
$$
(provided that $Q$ is large enough).

Hence for $T$ in~\eqref{eq:  sum T}, using Lemma~\ref{lem:P-V},  we derive 
$$
T  \ll t^2  Q^{1/4}\log Q \sum_{e \le z^2} |\lambda^+_e| .
$$ 
Recalling~\eqref{eq: sum|lambda_n|} and the choice of $z$ in~\eqref{eq: z}, 
we now derive 
$$
T  \ll t^2  \frac{z^2}{(\log z)^2}  Q^{1/4}  \log Q \ll t^2 Q^{3/4}(\log Q)^{-1}.
$$ 
Substituting this bound in~\eqref{eq:S and T}, we obtain
$$
S \ll t  \frac{Q}{\log Q} + t^2 Q^{3/4}(\log Q)^{-1}.
$$ 
Thus we see from~\eqref{eq:Pf S} that
 \begin{equation}
\label{eq: PfQ-Bound}
P_f(Q) \ll t^{-1}   \frac{Q}{\log Q}  + Q^{3/4}(\log Q)^{-1},
\end{equation} 
and with  the choice of $t$ in~\eqref{eq:N t 12} the result follows.

\subsection{Proof of Theorem~\ref{thm:P3}}  
Applying now Lemma~\ref{lem:squaresP3} with 
 some  sufficiently small constant $c_2>0$, 
depending only on $f$ and 
\begin{equation}
\label{eq:N t 3}
N = \rf{c_5 \log \log Q} \mand t=  2N
\end{equation}
for some sufficiently small $c_5$, depending only on $f$, 
we conclude that the products  $u_{n_1}u_{n_2}$,  $n_1,n_2\in\cM$, $n_1\ne n_2$, are 
never  squares and, by Lemma~\ref{lem:res height}, one has
$$
3  \le |u_{n_1}u_{n_2}| \le Q^{1/2}.
$$ 
Hence  we still have the bound~\eqref{eq: PfQ-Bound}
however  with  the choice of $t$ in~\eqref{eq:N t 3}. 

\subsection{Proof of Theorem~\ref{thm:P3-GRH}} 
As in the proof of Theorem~\ref{thm:P3}, we choose 
\begin{equation}
\label{eq:N t 3 GRH}
N = \rf{c_6  \log Q} \mand t=  2N
\end{equation}
for some sufficiently small $c_6$, depending only on $f$. Therefore, by Lemma~\ref{lem:squaresP3}, the products  $u_{n_1}u_{n_2}$,  $n_1,n_2\in\cM$, $n_1\ne n_2$, are 
never  squares and, by Lemma~\ref{lem:res height}, one has
$$
3  \le |u_{n_1}u_{n_2}| \le \exp\(O\(d^{N+t}\)\)\le \exp\(Q^{1/3}\)
$$
if $c_6$ is small enough.

We recall the inequality~\eqref{eq:Pf S}, however this time we do not  expand the sum over primes in $S$, that is, by~\eqref{eq:parity} and~\eqref{eq: Spm flip} we have
$$
S  \le \sum_{p\in[Q,2Q]}
\sum_{n_1,n_2\in\cM}  \left(\frac{p}{u_{n_1}u_{n_2}}\right).
$$
Hence, using  Lemma~\ref{lemma:char sum-GRH} for the non-diagonal term, we arrive 
(recalling the choice of $N$ and $t$ in~\eqref{eq:N t 3 GRH}) to 
$$
P_f(Q)\ll  t^{-1} \frac{Q}{  \log Q } +  Q^{5/6}\ll \frac{Q}{(\log Q)^2},
$$
which concludes the proof.

 \section*{Acknowledgements} 
 
The authors  are very grateful to Adam Harper for his suggestion to use the Selberg 
sieve in order to estimate the character sums~\eqref{eq:Sum S} which is crucial 
for our result. The authors also would like to thank Bryce Kerr for discussing  the
potential use
of the Selberg sieve for a  
refinement of the result of~\cite[Theorem~2]{KonShp}, see Section~\ref{sec:mot}.

  During the preparation of this paper,  the authors were  supported by  Australian Research Council Grants DP200100355 and  DP230100530.
  
   The first author  gratefully acknowledges the hospitality and support of the Max Planck Institute for Mathematics in Bonn  and 
  the Mittag-Leffler Institute in Stockholm, where parts of this work has been carried out.
 The second author was supported by the Knut and Alice Wallenberg Fellowship and would also like to thank the Mittag-Leffler Institute for its hospitality 
and excellent working environment.

\end{document}